\def\bsuffix #1{#1}
\newcommand{\iint}{\int\!\!\int}
\newtheorem{theorem}{Theorem}
\newtheorem{lemma}[theorem]{Lemma}
\newtheorem{theoremm}{Theorem}
\newtheorem{variational}{Variational Problem}
\newcommand{\R}{\mathbb{R}}
\newcommand{\Z}{\mathbb{Z}}
\newcommand{\aztec}{\mbox{AD}}
\newcommand{\prob}{\mathbb{P}}
\newcommand{\pdomino}{\mathbb{P}^n_{\mbox{\fontsize{8.36pt}{8.36pt}\selectfont{\textsc{Dom}}}}}
\newcommand{\symheight}{H_{\mbox{\fontsize{8.36pt}{8.36pt}\selectfont{\textsc{Sym}}}}}
\newcommand{\punif}{\mathbb{P}_{\mbox{\fontsize{8.36pt}{8.36pt}\selectfont{\textsc{Unif}}}}}
\newcommand{\asm}{\mathcal{A}}
\newcommand{\bij}{\varphi_{\mathrm{ASM}\to\mathrm{CMT}}}
\newcommand{\feasible}{\mathcal{F}}
\begin{document}
\begin{frontmatter}

\title{Arctic circles, domino tilings and square~Young~tableaux}
\runtitle{Arctic circles, domino tilings and square Young tableaux}

\begin{aug}
\author[A]{\fnms{Dan} \snm{Romik}\corref{}\ead[label=e1]{romik@math.ucdavis.edu}}
\runauthor{D. Romik}
\affiliation{University of California, Davis}
\address[A]{Department of Mathematics\\
University of California\\
One Shields Ave.\\
Davis, California 95616\\
USA\\
\printead{e1}} 
\end{aug}

\received{\smonth{10} \syear{2009}}
\revised{\smonth{10} \syear{2010}}

%
\begin{abstract}
The arctic circle theorem of Jockusch, Propp, and Shor asserts that
uniformly random domino tilings of an Aztec diamond of high order are
frozen with asymptotically high probability outside the ``arctic
circle'' inscribed within the diamond. A similar arctic circle
phenomenon has
been observed in the limiting behavior of
random square Young tableaux. In this paper, we show that
random domino tilings of the Aztec diamond
are asymptotically related to random square Young tableaux in a more
refined sense that looks also at the
behavior inside the arctic circle. This is done by giving a new
derivation of the limiting shape of the height function of a random
domino tiling of the Aztec diamond that uses the large-deviation
techniques developed for the square Young tableaux problem in a previous
paper by Pittel and the author. The solution of the
variational problem that arises for domino tilings
is almost identical to the solution for the case of
square Young tableaux by Pittel and the author.
The analytic techniques used to solve the variational
problem provide a systematic, guess-free approach for solving
problems of this type which have appeared in a number of related
combinatorial probability models.
\end{abstract}

%
\begin{keyword}[class=AMS]
\kwd{60C05}
\kwd{60K35}
\kwd{60F10}.
\end{keyword}
\begin{keyword}
\kwd{Domino tiling}
\kwd{Young tableau}
\kwd{alternating sign matrix}
\kwd{Aztec diamond}
\kwd{arctic circle}
\kwd{large deviations}
\kwd{variational problem}
\kwd{combinatorial probability}
\kwd{Hilbert transform}.
\end{keyword}

\end{frontmatter}

\section{Introduction}\label{intro}

\subsection{Domino tilings and the arctic circle theorem}

A domino in $\R^2$ is a~$\Z^2$-translate of either of the two sets
$[0,1]\times[0,2]$ or $[0,2]\times[0,1]$.
If $S \subset\R^2$ is a region comprised of a union of $\Z
^2$-translates of $[0,1]^2$,
a \textit{domino tiling} of $S$ is a representation of $S$
as a union of
dominoes with pairwise disjoint interiors. Domino tilings, or
equivalently the dimer model on a square lattice, are an extensively
studied and well-understood lattice model in statistical physics
and
combinatorics. Their rigorous analysis dates back to Kaste\-leyn~\cite{kasteleyn} and\vadjust{\goodbreak}
Temperley and Fisher~\cite{temperleyfisher}, who
independently derived the~for\-mula
\[
\prod_{j=1}^m \prod_{k=1}^n \biggl|2 \cos\biggl(\frac{\pi
j}{m+1}\biggr)+2\sqrt{-1} \cos\biggl(\frac{\pi k}{n+1}
\biggr)\biggr|^{1/2}
\]
for the number of domino tilings of an $n\times m$ rectangular region.
About thirty years later, a different family of regions was found to
have a much simpler formula for the number of its domino tilings: if we
define the \textit{Aztec diamond} of order $n$ to be the set
\[
\aztec_n = \bigcup_{i=-n}^{n-1} \bigcup_{j=\max
(-n-i-1,-n+i)}^{\min(n+i,n-i-1)} [i,i+1]\times[j,j+1]
\]
(see Figure \ref{fig-aztecdiamond}),
then Elkies et al. \cite{elkiesetal} proved that $\aztec_n$ has exactly
\[
2^{{n+1\choose2}}
\]
domino tilings. This can be proved by induction in several ways, but is
perhaps best understood via a connection to
\textit{alternating sign matrices}.

%
%
\begin{figure}
\begin{tabular}{@{}cc@{}}

\includegraphics{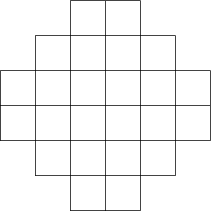}
 & \includegraphics{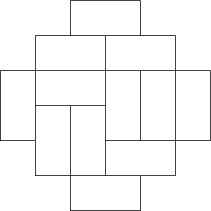}\\
(a) & (b)
\end{tabular}
\caption{The Aztec diamond of order $3$ and one of its $64$ tilings by
dominoes.}\label{fig-aztecdiamond}
\end{figure}

One of the best-known results on domino tilings is the
\textit{arctic circle theorem} due to Jockusch, Propp and Shor \cite
{jockuschproppshor}, which describes the asymptotic behavior of
uniformly random domino tilings of the Aztec diamond.
Roughly, the theorem states that
the so-called \textit{polar regions}, which are the four contiguous
regions adjacent to the four corners of the Aztec diamond in which the
tiling behaves in a predictable brickwork pattern, cover a region that
is approximately equal to the area that lies outside the circle
inscribed in the diamond. See Figure \ref{fig-arcticcircle}, where the
outline of the so-called ``arctic'' circle can be clearly discerned.
The precise statement is the following.
\begin{theorem}[(The arctic circle theorem \cite{jockuschproppshor})]
\label{thm-arcticcircle}
Fix $\varepsilon>0$. For each $n$, consider a uniformly random domino
tiling of $\aztec_n$ scaled by a factor $1/n$ in each axis to fit into
the limiting diamond
\[
\aztec_\infty:= \{ |x|+|y|\le1 \},
\]
and let $P_n^\circ\subset n^{-1}\aztec_n$ be the image of the polar
regions of the random tiling under this scaling transformation. Then as
$n\to\infty$ the event that
\begin{eqnarray*}
&&\bigl\{ (x,y)\in\aztec_\infty\dvtx x^2 + y^2 > \tfrac12 + \varepsilon
\bigr\}
\cap(n^{-1} \aztec_n)
\\
&&\qquad\subset P_n^\circ
\subset
\bigl\{ (x,y)\in\aztec_\infty\dvtx x^2 + y^2 > \tfrac12 - \varepsilon
\bigr\}
\end{eqnarray*}
holds with probability that tends to 1.
\end{theorem}

%
%
\begin{figure}

\includegraphics{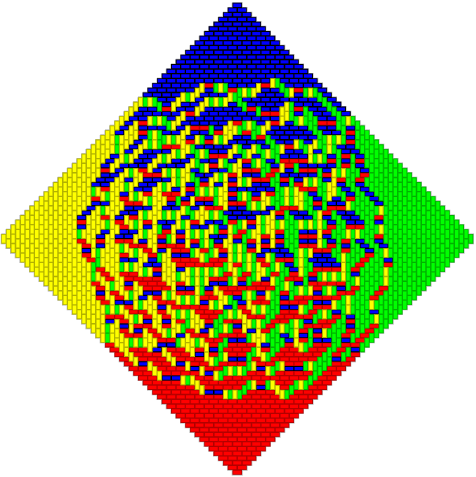}

\caption{The arctic circle theorem: in a random domino
tiling of $\aztec_{50}$, the circle-like shape is clearly visible.
Here, dominoes are colored according to their type and
parity.}\label{fig-arcticcircle}
\end{figure}

In later work, Cohn, Elkies and Propp \cite{cohnelkiespropp} derived
more detailed asymptotic information about
the behavior of random domino tilings of the Aztec diamond, that gives a
quantitative description of the behavior of the tiling inside the
arctic circle. They proved two main results (which are roughly
equivalent, if some technicalities are ignored), concerning the
\textit{placement probabilities} (the probabilities to
observe a given
type of domino in a given position in the diamond) and the
\textit{height function} of the tiling (which, roughly
speaking, encodes a
weighted counting
of the number of dominoes of different types encountered while
travelling from a fixed place to a given position in the diamond; see Section
\ref{sec-domino} for the precise definition).

A main goal of this paper is to give a new proof of the
Cohn--Elkies--Propp limit shape theorem for the height function of a
uniformly random domino tiling of the Aztec diamond; see
Theorem \ref{thm-limitshape-dom} in Section \ref{sec-domino}. Our
proof is\vadjust{\goodbreak}
based on a large deviations analysis, and so gives some information
that the proof in \cite{cohnelkiespropp} (which is based on generating
functions) does not: a large deviation principle for the
height function. Perhaps more importantly, it highlights a surprising
connection between the domino tilings model and another, seemingly
unrelated, combinatorial
probability model, namely that of \textit{random square Young
tableaux}.

%
%
\begin{figure}

\includegraphics{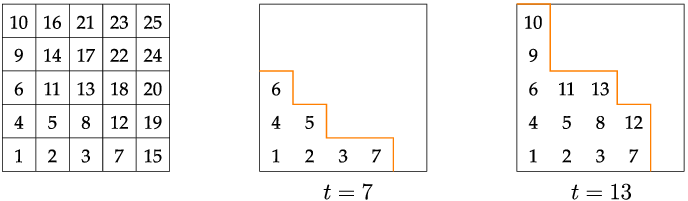}

\caption{A square Young tableau of order $5$ (shown in the
``French'' coordinate system) and the wall whose construction the
tableau encodes at various stages of its
construction.}\label{fig-squareyoungtableau}
\end{figure}

\subsection{Random square Young tableaux}

Recall that a square (standard) Young tableau of order $n$ is an array
$(t_{i,j})_{i,j=1}^n$ of integers whose entries consist of the integers
$1,2,\ldots,n^2$, each one appearing exactly once, and such that each
row and column are arranged in increasing order. One can think of a
square Young tableau as encoding a sequence of instructions for
constructing an $n\times n$ wall of square bricks leaning against the
$x$- and $y$-axes
by laying bricks sequentially,
where the rule is that each brick can be placed only in a position
which is supported from below and from the left by existing bricks or
by the axes.
In this interpretation, the number $t_{i,j}$ represents the time at
which a brick was added in position $(i,j)$;
see Figure \ref{fig-squareyoungtableau}. The number of square Young
tableaux of order $n$ is known (via the hook-length formula of
Frame--Thrall--Robinson) to be
\[
\frac{(n^2)!}{\prod_{i,j=1}^n (i+j-1)}.
\]

In \cite{pittelromik}, Boris Pittel and the author solved the problem
of finding the \textit{limiting growth profile}, or
\textit{limit shape}, of
a randomly chosen square Young tableau of high order. In other words,
the question is to find the growth profile of the square wall
``constructed in the most random way.'' This can be expressed either in
terms of the limit in probability $L(x,y)$ of the scaled tableau
entries~$n^{-2} t_{i,j}$, where $(x,y)\in[0,1]^2$ and $i=i(n)$ and
$j=j(n)$ are some sequences such that $i/n\to x$ and $j/n \to y$ as
$n\to\infty$; or alternatively in terms of the limiting shape of the
family of scaled ``sublevel sets'' $\{ n^{-1} (i,j) \dvtx t_{i,j} \le
\alpha\cdot n^2 \}$ for each $\alpha\in(0,1)$ (which in the
``wall-building'' metaphor represents the shape of the wall at various
times, and thus can be thought of as encoding the growth profile of the wall).
Figure \ref{fig-tableaulimitshape} shows the limiting growth profile
found by Pittel and Romik and the corresponding profile of a randomly
sampled square Young tableau of order $100$.

%
%
\begin{figure}

\includegraphics{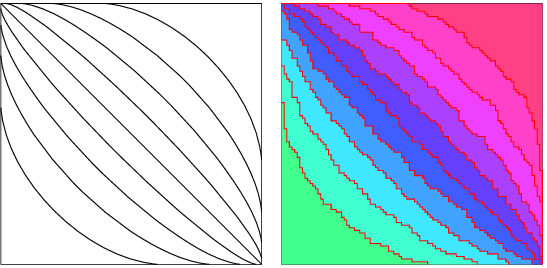}

\caption{The limiting growth profile of a random square
Young tableau and the profile of a randomly sampled tableau of order
$100$. The curves shown correspond to (scaled) times $t=i/10$,
$i=1,2,\ldots,9$.}\label{fig-tableaulimitshape}
\end{figure}

For the precise definition of the limiting growth profile, see
\cite{pittelromik}.
Here, we mention only the following fact which will be needed in the
next subsection: if $L\dvtx[0,1]\times[0,1]\to[0,1]$ is the limit shape
function mentioned above, then its values along the boundary of the
square are given by
%
%
\begin{eqnarray}\label{eq:isenough1}
L(0,t)&=& L(t,0) = \frac{1-\sqrt{1-t^2}}{2}\qquad (0\le t\le1),
\\
\label{eq:isenough2}
L(1,t) &=& L(t,1) = \frac{1+\sqrt{t(2-t)}}{2}\qquad (0\le
t\le1).
\end{eqnarray}
Also note that according to the limit shape theorem, the convergence of
$n^{-2} t_{i,j}$ to $L(i/n,j/n)$ as $n\to\infty$ is uniform in $i$
and $j$ (this follows easily from monotonicity considerations).

\subsection{An arctic circle theorem for square Young tableaux}

While it is not immediately apparent from the description of this limit
shape result, it follows as a simple corollary of it that random square
Young tableaux also exhibit an ``arctic circle''-type phenomenon. That
is, there is an equivalent way of visualizing the random tableau in
which a spatial phase transition can be seen occurring along a circular
boundary, where outside the circle the behavior is asymptotically
deterministic (the ``frozen'' phase) and inside the circle the behavior
is essentially random (the ``disordered'' or ``temperate'' phase). This
fact, overlooked at the time of publication of the paper \cite
{pittelromik}, was observed shortly afterwards by Benedek Valk\'o~\cite
{valko}. In fact, deducing the arctic circle result is easy and
requires only the facts (\ref{eq:isenough1}), (\ref{eq:isenough2})
mentioned above, which contain only a small part of the information of
the limit shape.

To see how the arctic circle appears, we consider a different encoding
of the information contained in the tableau via a system of particles
on the integer lattice~$\mathbb{Z}$. In this encoding we have $n$
particles numbered $1,2,\ldots,n$, where initially, each particle with
index $k$ is in position $k$. The particles are constrained to remain
in the interval $[1,2n]$. At discrete time steps, particles jump one
step to the right, provided that the space to their right is empty (and
provided that they do not leave the interval $[1,2n]$). At each time
step, exactly one particle jumps.

It is easy to see that after exactly $n^2$ steps, the system will
terminate when it reaches the state in which each particle $k$ is in
position $n+k$, and no further jumps can take place. We call the
instructions for evolving the system of particles from start to finish
a \textit{jump sequence}. We can now~add a~probabilistic
element to this
combinatorial model by considering the uniform probability measure on
the set of all jump sequences of order $n$, and name the resulting
probability model the \textit{jump process} of order $n$.
But in fact,
this is nothing more than a thinly disguised version of the random
square Young tableaux model, since jump sequences are in a simple
bijection with square Young tableaux: given a square tableau, think of
the sequence of numbers in row $k$ of the tableau as representing the
sequence of times during which particle $n+1-k$ jumps to the right.
This is illustrated in Figure \ref{fig-tableauparticlesbijection}. We
leave to the reader the easy verification that this gives the desired bijection.

%
%
\begin{figure}[b]

\includegraphics{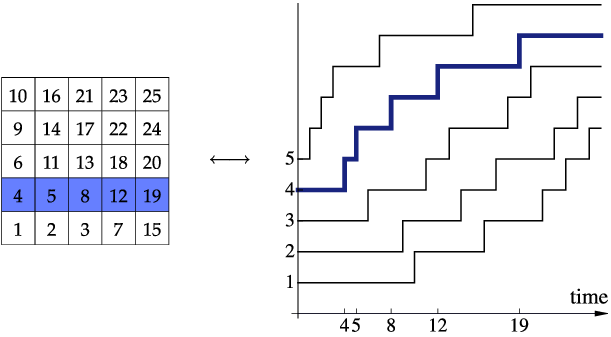}

\caption{The bijection between square Young tableaux and
jump sequences: each row in the tableau encodes the sequence of times
at which a given particle jumps. As an example, the highlighted
trajectory on the right-hand side corresponds to the highlighted row on
the left-hand side.} \label{fig-tableauparticlesbijection}
\end{figure}

With these definitions, it is now natural to consider the asymptotic
behavior of this system of particles as $n\to\infty$. Figure \ref
{fig-tableauxarctic} shows the result for a simulated system with
$n=40$. Here we see a circle-like shape appearing again. To
formulate\vadjust{\goodbreak}
precisely what is happening, given a jump process of order $n$, for
each $1\le k\le2n$, let
%
%
\begin{figure}

\includegraphics{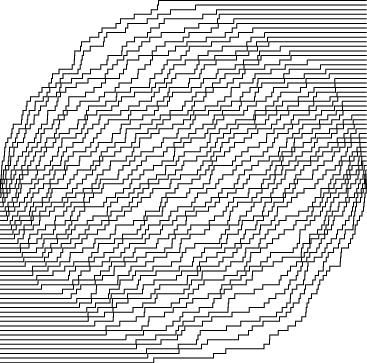}

\caption{A jump process with $40$ particles.} \label{fig-tableauxarctic}
\vspace*{-3pt}
\end{figure}
$\tau_n^{-}(k)$ and $\tau_n^{+}(k)$ denote, respectively, the first and
last times at which a particle $k$ jumped from or to position $k$.
Define the \textit{frozen time-period} in position $k$ to be
the union of
the two intervals
\[
[0,\tau_n^{-}(k)] \cup[\tau_n^{+}(k), n^2].\vspace*{-2pt}
\]

\begin{theorem}[(The arctic circle theorem for random square Young tableaux)]
\label{thm-arcticyoung}
Fix any $\varepsilon>0$.
Denote
\[
\varphi_{\pm}(x) = \tfrac12\pm\sqrt{x(1-x)}.
\]
As $n\to\infty$, the event
\begin{eqnarray*}
&&\Bigl\{
\max_{1\le k\le2n}
| n^{-2} \tau_n^{-}(k)- \varphi_{-}(k/2n)
|
< \varepsilon
\Bigr\}
\\
&&\qquad{}\cap
\Bigl\{
\max_{1\le k\le2n}
| n^{-2} \tau_n^{+}(k)- \varphi_{+}(k/2n)
|
< \varepsilon
\Bigr\}
\end{eqnarray*}
holds with probability that tends to $1$. In other words, if the
space--time diagram of the trajectories in a random jump process is
mapped to the unit square $[0,1]\times[0,1]$ by scaling the time axis
by a factor $1/n^2$ and scaling the position axis by a factor of
$1/2n$, then for large $n$ the frozen time-periods will occupy
approximately the part of the space--time diagram that lies in the
complement of the disc
\[
\{ (x,y)\in\mathbb{R}^2 \dvtx (x-1/2)^2 +
(y-1/2)^2 \le1/2 \}
\]
inscribed in the square.\vspace*{-2pt}
\end{theorem}
\begin{pf}
First, note the following simple observations that express the times
$\tau_n^{-}(k$) and $\tau_n^{+}(k)$ in terms of the Young tableau
$(t_{i,j})_{i,j=1}^n$:

\begin{longlist}
\item For $1 \le k\le n$ we have $\tau_n^{-}(k) = t_{n+1-k,1}$.\vadjust{\goodbreak}
\item For $n+1 \le k\le2n$ we have $ \tau_n^{-}(k) = t_{1,k-n}$.
\item For $1 \le k \le n$ we have $\tau_n^{+}(k) = t_{n, k}$.
\item For $n+1 \le k\le2n$ we have $\tau_n^{+}(k) = t_{2n+1-k,n}$.
\end{longlist}
For example, the first statement is based on the fact that when $1\le
k\le n$, the time $\tau_n^{-}(k)$ is simply the first time at which
the particle starting at position $k$ (which corresponds to row $n+1-k$
in the tableau) jumps. The three remaining cases are equally simple and
may be easily verified by the reader.

Combining these observations with (\ref{eq:isenough1}) and (\ref
{eq:isenough2}) and the limit shape theorem, we now see that after
scaling the times $\tau_n^{-}(k)$ and $\tau_n^{+}(k)$ by a factor of
$n^{-2}$, we get quantities that converge in the limit, uniformly in
$k$, to values determined by the appropriate substitution of boundary
values in the limit shape function $L(x,y)$. For example, to deal with
case (i) above, when $1\le k\le n$, using (\ref{eq:isenough1}) we have that
\begin{eqnarray*}
n^{-2} \tau_n^{-}(k) &=& n^{-2} t_{n+1-k,1} \approx
L\biggl(0,1-\frac{k-1}{n}\biggr)\\ &=& \frac{ 1-\sqrt{1-
(1-({k-1})/{n})^2}}{2} = \frac{1-\sqrt{({k-1})/{n}
(1-({k-1})/{n})}}{2} \\ &=&
\varphi_{-}\biggl(\frac{k-1}{2n}\biggr) \approx\varphi_{-}(k/2n),
\end{eqnarray*}
uniformly in $1 \le k \le n$. Similarly, the other three cases each
imply that $n^{-2}\tau_n^{\pm}(k)$ is uniformly
close to $\varphi_{\pm}(k/2n)$ in the appropriate range of values of
$k$; we omit the details. Combining these four cases gives exactly that
the event in Theorem \ref{thm-arcticyoung} holds with asymptotically
high probability as $n\to\infty$.
\end{pf}

\subsection{Similarity of the models and the analytic technique}

Apart from giving a new proof of the limit shape theorem of Cohn,
Elkies and Propp, another main goal of this paper is to show that the
two models described in the preceding sections (random domino tilings
of the Aztec diamond and random square Young tableaux) exhibit similar
behavior on a more detailed level than that of the mere appearance of
the arctic circle, and that in fact they are almost equivalent in an
asymptotic sense. Our new proof of the limit shape theorem for the
height function will use the same techniques developed in \cite
{pittelromik} for the case of random square Young tableaux: we first
derive a large deviations principle, not for domino tilings but for a
related model of random \textit{alternating sign matrices},
then solve the
resulting problem in the calculus of variations using an analysis that
parallels, to a remarkable (and, in our opinion, rather surprising)
level of similarity, the analysis of the variational problem in \cite
{pittelromik}. The resulting formulas for the solution of the
variational problem are almost identical to the formulas for the
limiting growth profile of random square Young tableaux. Up to some
trivial scaling factors related to the choice of coordinate system, the
formulas for the two limit shapes can be written in such a way that the
only difference between them is a single minus sign.

Another important aspect of our results lies not in the results
themselves but in the techniques used. We use the methods first
presented in~\cite{pittelromik} to solve another variational problem
belonging to a class of problems previously thought to be difficult to
analyze, due to a lack of a systematic framework that enables one to
derive the solution in a relatively mechanical way (as opposed to
having to guess it using some deep analytic insight) and then
rigorously verify its claimed extremal properties. This justifies to
some extent the claim from \cite{pittelromik} that the analytic
techniques of that paper provide a systematic approach for dealing with
such problems, which seem to appear frequently in the analysis of
combinatorial probability models (see
\cite
{cohnlarsenpropp,loganshepp,pittelromik,vershikkerov1,vershikkerov2}),
and are also
strongly related to classical variational problems arising in
electrostatics and in random matrix theory.

The rest of the paper is organized as follows. In Section
\ref{sec-asm} we recall some facts about alternating sign matrices,
and study
the problem of finding the limiting height matrix of an alternating
sign matrix chosen randomly according to \textit{domino
measure}, which
is a natural (nonuniform) probability measure on the set of
alternating sign matrices of order $n$. In Section \ref{sec-largedev}
we derive a~large deviation principle for this model. This problem is
solved in Section \ref{sec-variational-solution}.
In Section \ref{sec-limitshape-asm} we prove a limiting shape theorem
for the height matrix of an alternating sign matrix chosen according
to domino measure. In Section \ref{sec-domino} we deduce from the
previous results the Cohn--Elkies--Propp limiting shape theorem for the
height function of
uniformly random domino tilings of the Aztec
diamond. Section \ref{sec-finalremarks} has some final remarks,
including a~discussion on the potential applicability of our methods to
attack the well-known
open problem of the limit shape of uniformly random alternating sign matrices.

\section{Alternating sign matrices} \label{sec-asm}

An \textit{alternating sign matrix} (often abbreviated as
\textit{ASM}) of
order $n$ is an $n\times n$ matrix with entries in $\{0,-1,1\}$ such
that in every row and every column, the sum of the entries is $1$, and
the nonzero numbers appear with alternating signs. See Figure \ref
{fig-asmexample}(a) for an example.
Alternating sign matrices were first defined and studied in the early
1980s by Robbins and Rumsey in connection with their study
\cite{robbinsrumsey} of Dodgson's condensation method for computing
determinants and of the \textit{$\lambda$-determinant}, a natural
generalization of the determinant that arises from the condensation
algorithm. Later, Robbins, Rumsey and Mills published several
intriguing theorems and conjectures about them \cite{millsrobbinsrumsey},
tying them to the study of plane partitions and leading to many later
interesting developments, some of which are described, for example, in
\cite{bressoud,proppmanyfaces}.

%
%
\begin{figure}
\begin{tabular}{@{}cc@{}}
$  \left( \begin{array}{r@{\hspace*{11pt}}r@{\hspace*{11pt}}r@{\hspace*{11pt}}r@{\hspace*{11pt}}r@{\hspace*{11pt}}r}
0 & 0 & 1 & 0 & 0 & 0 \\
0 & 1 & -1 & 0 & 1 & 0 \\
1 & -1 & 0 & 1 & 0 & 0 \\
0 & 1 & 0 & 0 & -1 & 1  \\
0 & 0 & 1 & 0 & 0 & 0 \\
0 & 0 & 0 & 0 & 1 & 0
\end{array} \right)
$
&
$  \left(\matrix{
0 & 0 & 0 & 0 & 0 & 0 & 0 \cr
0 & 0 & 0 & 1 & 1 & 1 & 1 \cr
0 & 0 & 1 & 1 & 1 & 2 & 2 \cr
0 & 1 & 1 & 1 & 2 & 3 & 3 \cr
0 & 1 & 2 & 2 & 3 & 3 & 4  \cr
0 & 1 & 2 & 3 & 4 & 4 & 5 \cr
0 & 1 & 2 & 3 & 4 & 5 & 6}
\right)
$\vspace*{8pt}
\\
(a) & (b)
\end{tabular}
\caption{\textup{(a)} An ASM of order 6; \textup{(b)} its height matrix.}
\label{fig-asmexample}
\end{figure}

Denote by $\asm_n$ the set of ASMs of order $n$. For a matrix $M\in
\asm_n$, denote by~$N_+(M)$ the number of its entries equal to $1$. An
important formula proved by Mills, Robbins and Rumsey states that
%
%
\begin{equation}\label{eq:dominomeasure}
\sum_{M\in\asm_n} 2^{N_+(M)} = 2^{{n+1\choose2}}.
\end{equation}
This is sometimes referred to as the ``2-enumeration'' of ASMs. The
reader may note that the right-hand side is equal to the number of
domino tilings of $\aztec_n$ mentioned at the beginning of the
\hyperref[intro]{Introduction}; indeed, a combinatorial explanation for (\ref{eq:dominomeasure})
in terms of domino tilings was found by Elkies et al.
\cite{elkiesetal}. In Section \ref{sec-domino} we will say more about this
connection and how to make use of it, but for now, we rewrite (\ref
{eq:dominomeasure}) more probabilistically as
\[
2^{-{n+1\choose2}} \sum_{M\in\asm_n} 2^{N_+(M)} = 1,
\]
and consider this as the basis for defining a probability measure on
$\asm_n$, which we call \textit{domino measure} (thus named
since it is
closely related to the uniform measure on domino tilings of $\aztec
_n$; see Section \ref{sec-domino}), given by the expression
\[
\pdomino(M) = 2^{N_+(M)-{n+1\choose2}}\qquad (M\in\asm_n).
\]
Our first goal will be to study the asymptotic behavior of large random
ASMs chosen according to domino measure, and specifically the limit
shape of their \textit{height matrix}. The height matrix of
an ASM
$M=(m_{i,j})_{i,j=1}^n \in\asm_n$ is defined to be the new matrix
$H(M) = ( h_{i,j} )_{i,j=0}^n$ of order $(n+1)\times(n+1)$ whose
entries are given by
\[
h_{i,j} = \sum_{p \le i} \sum_{q \le j} m_{p,q}.
\]
The matrix $H(M)$ is also sometimes referred to as the
\textit{corner sum matrix} of~$M$.
It satisfies the following conditions:
{\setcounter{equation}{0}
\renewcommand{\theequation}{H\arabic{equation}}
\begin{eqnarray} \label{eq:h1}\qquad
&h_{0,k} = h_{k,0} = 0 \qquad\mbox{for all }0\le k\le
n,&
\\
&h_{n,k} = h_{k,n} = k \qquad\mbox{for all }0\le k\le
n,&
\\
\label{eq:h3}
&0\le
h_{i+1,j} - h_{i,j},
h_{j,i+1} - h_{j,i}\le1
\qquad\mbox{for all }0\le i<n, 0\le j\le n.&
\end{eqnarray}
}

\vspace*{-\baselineskip}

\noindent
See Figure \ref{fig-asmexample}(b) for an example.
(In fact, it is not too difficult to see that the correspondence $M\to
H(M)$ defines a bijection between the set of ASMs of order $n$ and the
set of matrices satisfying conditions (\ref{eq:h1})--(\ref{eq:h3})
(see \cite{robbinsrumsey}, Lemma 1) but we will not need this fact
here.) In particular, the ``Lipschitz''-type condition (\ref{eq:h3})
means that the height matrix can be thought of as a~discrete version of
a two-dimensional surface, and is therefore a natural candidate for
which to try and prove a limit shape result.

The basis for our analysis of $\pdomino$-random ASMs is a formula
which will give the probability distribution (under the measure
$\pdomino$) of the $k$th row of the height matrix, for each $1\le k\le
n$. To describe this, first, as usual, denote the Vandermonde function by
\[
\Delta(u_1,\ldots,u_m) = \prod_{1\le i<j \le m} (u_j-u_i).
\]
Second, for an ASM $M \in\asm_n$ and some $1\le k\le n$, let
$X_k(1)<X_k(2)<\cdots< X_k(k)$ be the unique
\textit{ascents} of the
$k$th row of the height matrix $H(M)$, namely those column indices such that
\[
h_{k,X_k(i)}-h_{k,X_k(i)-1} = 1\qquad (i=1,2,\ldots,k).
\]
Note that the conditions (\ref{eq:h1})--(\ref{eq:h3}) guarantee that
the ascents exist, that there are exactly $k$ of them and that the
original $k$th row of $H(M)$ can be recovered from them.
\begin{theorem} \label{thm-mostimportant}
If integers $1\le x_1<x_2<\cdots<x_k\le n$ are given, and if
$y_1<y_2<\cdots<y_{n-k}$ are the numbers in $\{1,2,\ldots,n\}
\setminus\{x_1,\ldots,x_k\}$ arranged in increasing order, then, in
the notation above, we have
%
\setcounter{equation}{3}
\begin{eqnarray}\label{eq:mostimportant}
&&\pdomino[ M\in\asm_n \dvtx (X_k(1),\ldots
,X_k(k))=(x_1,\ldots,x_k) ]
\nonumber\\[-8pt]\\[-8pt]
&&\qquad=
\frac{2^{{k+1\choose2}} 2^{{n-k+1\choose2}}}{2^{{n+1\choose2}}}
\cdot
\frac{\Delta(x_1,\ldots,x_k) \Delta(y_1,\ldots,y_{n-k})}{\Delta
(1,2,\ldots,k) \Delta(1,2,\ldots,n-k)}.
\nonumber
\end{eqnarray}
\end{theorem}

To prove Theorem \ref{thm-mostimportant}, we use another well-known
combinatorial bijection relating ASMs to
\textit{monotone triangles}. A
monotone triangle of order $n$
is a~triangular array
$ (t_{i,j})_{1\le i\le n, 1\le j\le i} $ of integers satisfying the inequalities
\[
t_{i,j} < t_{i,j+1},\qquad t_{i,j} \le t_{i-1,j} \le t_{i,j+1}\qquad
(2\le i\le n, 1\le j\le i-1).
\]
A \textit{complete} monotone triangle of order $n$ is a
monotone triangle
whose bottom row consists of the numbers $(1,2,\ldots,n)$. It is well
known that alternating sign matrices of order $n$ are in bijection with
complete monotone triangles of order $n$. In our terminology, the bijection
assigns to an ASM $M=(m_{i,j})_{i,j=1}^n$ the monotone triangle
\[
T= (t_{i,j})_{1\le i\le n, 1\le j\le i} =\bij(M)
\]
whose $k$th row $(t_{k,j})_{1\le j\le k}$ consists for each $1\le k\le
n$ of the ascents of the $k$th row of the height matrix $H(M)$,
arranged in increasing order. See Figure~\ref{fig-mtexample}(a) for an
%
%
\begin{figure}
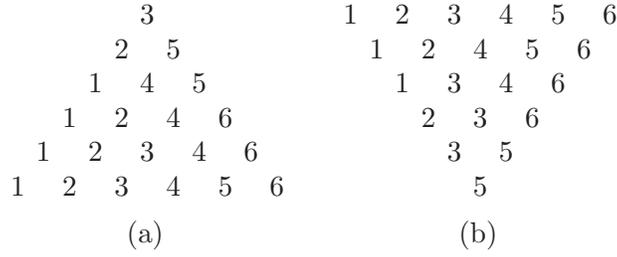

\begin{center}
\begin{tabular}{cc}
\begin{tabular}{*{15}{c@{\hspace{0.06in}}}}
& & && & 3 & & & & &
\\
& & && 2 & & 5 & & & &
\\
& && 1 & & 4 & & 5 & & &
\\
& & 1 && 2 && 4 & & 6 & &
\\
& 1 && 2 && 3 && 4 && 6 &
\\
1 && 2 && 3 && 4 && 5 && 6
\end{tabular}
&
\begin{tabular}{*{15}{c@{\hspace{0.06in}}}}
1 && 2 && 3 && 4 && 5 && 6
\\
& 1 && 2 && 4 && 5 && 6 &
\\
& & 1 && 3 && 4 & & 6 & &
\\
& && 2 & & 3 & & 6 & & &
\\
& & && 3 & & 5 & & & &
\\
& & && & 5 & & & & &
\end{tabular}\vspace*{5pt}
\\ (a) & (b)
\end{tabular}
\caption{\textup{(a)} The complete monotone triangle corresponding to the ASM
in Figure \protect\ref{fig-asmexample}; \textup{(b)}~its dual, shown
``standing on its
head.''}
\label{fig-mtexample}
\end{center}
\end{figure}
example. More explicitly, it is easy to check that this means that an
index $j$ will be present in the $k$th row of $T$ if and only if
\[
\sum_{i=1}^k m_{i,j} = 1
\]
holds.

Another notion that will prove useful is that of the
\textit{dual} of a
complete monotone triangle.
If $T$ is a complete monotone triangle of order $n$, and $M$ is the ASM
in $\asm_n$
such that $T=\bij(M)$,
then the dual $T^*$ of $T$ is the complete monotone triangle of order $n$
that corresponds via the same bijection to the matrix $W$, defined as
the vertical reflection of $M$, that is, the matrix such that
$w_{i,j}=m_{n+1-i,j}$ for all $i,j$ (clearly it, too, is an ASM). See
Figure \ref{fig-mtexample}(b), where the dual triangle is drawn
reflected vertically.

The following simple observation describes more explicitly the
connection between a monotone triangle and its dual.
\begin{lemma} \label{lem-simpleobservation}
If $T=(t_{i,j})_{1\le i\le n, 1\le j\le i} $ is a complete monotone
triangle of order $n$, then for each $1\le k\le n-1$, the $(n-k)$th row
of the dual triangle~$T^*$ consists of the numbers in the complement
\[
\{1,2,\ldots,n\} \setminus\{ t_{k,1},t_{k,2},\ldots,t_{k,k} \}
\]
of the $k$th row of $T$, arranged in increasing order.
\end{lemma}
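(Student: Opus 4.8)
The plan is to translate both the bijection $\bij$ and the vertical-reflection operation down to the level of the ASM entries $m_{i,j}$, whereupon the asserted complementarity will follow from the elementary fact that top partial column sums of an ASM take only the values $0$ and $1$. The starting point is the explicit description of $\bij$ recorded just before the lemma: writing $T=\bij(M)$ for the ASM $M=(m_{i,j})$, an index $j$ belongs to the $k$-th row of $T$ if and only if $\sum_{i=1}^{k} m_{i,j}=1$. So the $k$-th row of $T$ is precisely the set $\{\, j : \sum_{i=1}^{k} m_{i,j}=1 \,\}$.

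Next I would apply this same description to the reflected matrix $W=(w_{i,j})$ with $w_{i,j}=m_{n+1-i,j}$, whose monotone triangle is $T^*$ by definition of the dual. An index $j$ then belongs to the $(n-k)$-th row of $T^*$ if and only if $\sum_{i=1}^{n-k} w_{i,j}=1$, and the substitution $p=n+1-i$ converts this into the condition $\sum_{p=k+1}^{n} m_{p,j}=1$. Invoking the column-sum property of an ASM, $\sum_{p=1}^{n} m_{p,j}=1$, rewrites this equivalently as $\sum_{p=1}^{k} m_{p,j}=0$. Finally, since $\sum_{p=1}^{k} m_{p,j}=h_{k,j}-h_{k,j-1}$, the Lipschitz-type condition (H3) guarantees that this top partial column sum always lies in $\{0,1\}$; hence for every $j\in\{1,\ldots,n\}$ exactly one of $\sum_{p=1}^{k} m_{p,j}=1$ and $\sum_{p=1}^{k} m_{p,j}=0$ holds. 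Comparing the two characterizations shows that $j$ lies in the $(n-k)$-th row of $T^*$ precisely when it fails to lie in the $k$-th row of $T$, which is the claimed complement relation; the increasing arrangement is automatic, since rows of monotone triangles are listed in increasing order.

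There is no genuine obstacle here — the whole argument is index bookkeeping — and the only point demanding care is getting the reflection shifts right. The reflection $i\mapsto n+1-i$ carries a sum over the top $n-k$ rows of $W$ to a sum over the bottom $k$ rows $k+1,\ldots,n$ of $M$, which is exactly why the row index $n-k$ of $T^*$ is paired with the row index $k$ of $T$. To guard against an off-by-one slip I would verify this pairing against the small example in Figure~\ref{fig-mtexample}.
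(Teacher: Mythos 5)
Your proposal is correct and follows essentially the same route as the paper's proof: both characterize membership in the $k$-th row of $T$ by $\sum_{i=1}^k m_{i,j}=1$ and membership in the $(n-k)$-th row of $T^*$ by $\sum_{i=k+1}^n m_{i,j}=1$, then observe that exactly one of these holds for each $j$. Your write-up merely makes explicit the index substitution $p=n+1-i$ and the reason for exclusivity (column sum equal to $1$ together with partial column sums lying in $\{0,1\}$), which the paper compresses into ``from the definition of an alternating sign matrix, one and only one of these conditions must hold.''
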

\begin{pf} Let $M=(m_{i,j})_{i,j}\in\asm_n$ be such that $T=\bij(M)$.
As mentioned above, $1\le j\le n$ appears in the $k$th row of $T$ if
and only if
$\sum_{i=1}^k m_{i,j}=1$. Similarly, from the definition of $T^*$ we
see that $j$ appears in the $(n-k)$th row of $T^*$ if and only if
$\sum_{i=k+1}^n m_{i,j}=1$. But from the definition of an alternating
sign matrix, one and only one of these conditions must hold.
\end{pf}

As the last step in the preparation for proving Theorem \ref
{thm-mostimportant}, we note that if $M\in\asm_n$ and $T=\bij(M)$, then
it is easy to see that $N_+(M)$, the number of $+1$ entries in $M$, can
be expressed in terms of $T$ as the number of entries $t_{i,j}$ in $T$
that do not appear in the preceding row (including, vacuously, the
singleton element in the top row). We denote this quantity also by~$N_+(T)$; note that it is defined more generally also for noncomplete
monotone triangles. We furthermore recall the following formula proved
by Mills, Robbins and Rumsey in \cite{millsrobbinsrumsey}, Theorem 2,
(see also \cite{elkiesetal}, equation (7), Section~4, and see
\cite{fischeroperatorgen} for a recent alternative proof and some
generalizations):

\begin{lemma} \label{lem-vandermondecounting}
If $k\ge1$ and $x_1<x_2<\cdots<x_k$ are integers, then
the sum of~$2^{N_+(T)}$ over all monotone triangles $T$ of order $k$
with bottom row
$(x_1,\ldots,\allowbreak x_k)$ is equal to
\[
2^{{k+1\choose2}}
\prod_{1\le i<j\le k} \frac{x_j-x_i}{j-i}.
\]
\end{lemma}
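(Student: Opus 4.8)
The plan is to prove the statement by induction on $k$, building each monotone triangle from the bottom row upward and tracking how the weight $2^{N_+(T)}$ decomposes. Write $f_k(x_1,\ldots,x_k)$ for the sum of $2^{N_+(T)}$ over all monotone triangles $T$ of order $k$ with bottom row $(x_1,\ldots,x_k)$; the base case $k=1$ is immediate, since the unique triangle contributes $2^{N_+}=2=2^{\binom{2}{2}}$. For the inductive step I would peel off the bottom row: a triangle of order $k$ with bottom row $x$ is the same data as a penultimate row $z=(z_1,\ldots,z_{k-1})$ satisfying the interlacing $x_i\le z_i\le x_{i+1}$, together with a monotone triangle of order $k-1$ with bottom row $z$ stacked on top. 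Since an entry contributes to $N_+$ exactly when it is absent from the row above it, the weight splits as $N_+(T)=c(x;z)+N_+(T_{\mathrm{top}})$, where $c(x;z)$ counts the bottom-row entries $x_i$ not appearing among the $z_j$. A short case analysis—each $z_j$ that hits an endpoint of its interval $[x_j,x_{j+1}]$ covers a distinct $x_i$, while interior $z_j$ cover nothing—shows $c(x;z)=1+\#\{j:x_j<z_j<x_{j+1}\}$. This yields the recurrence $f_k(x)=\sum_z 2^{c(x;z)}f_{k-1}(z)$, the sum being over strictly increasing interlacing tuples $z$.

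Feeding in the induction hypothesis $f_{k-1}(z)=2^{\binom{k}{2}}\Delta(z)/\Delta(1,\ldots,k-1)$ and using $\binom{k}{2}+k=\binom{k+1}{2}$ together with $(k-1)!\,\Delta(1,\ldots,k-1)=\Delta(1,\ldots,k)$, the whole problem reduces to the single summation identity $\sum_z 2^{\#\{j:x_j<z_j<x_{j+1}\}}\Delta(z)=\frac{2^{k-1}}{(k-1)!}\Delta(x_1,\ldots,x_k)$, where $z$ ranges over strictly increasing tuples with $x_j\le z_j\le x_{j+1}$. Proving this identity is the crux of the argument, and is where I expect the main difficulty to lie, because the strict-monotonicity constraint couples the summation variables while the target is antisymmetric in the $x_i$ even though the summation region is not.

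To prove the identity I would first observe that the coupling can be removed for free: among tuples with each $z_j\in[x_j,x_{j+1}]$ chosen independently, the only way to violate strict monotonicity is $z_j=z_{j+1}=x_{j+1}$ for some $j$ (non-adjacent coincidences are impossible, since $z_a\le x_{a+1}<x_{a+2}\le z_b$ forces $z_a<z_b$), and every such tuple has $\Delta(z)=0$. Hence the constrained and the independent sums agree. Writing $\Delta(z)=\det(z_i^{\,j-1})_{i,j=1}^{k-1}$ and noting that the weight $\prod_i 2^{[x_i<z_i<x_{i+1}]}$ factors over the rows, multilinearity of the determinant collapses the independent sum into $\det(B_{i,j})$, where $B_{i,j}=2\sum_{z=x_i}^{x_{i+1}}z^{j-1}-x_i^{\,j-1}-x_{i+1}^{\,j-1}$.

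The final step is to evaluate this determinant. The key algebraic observation is that $B_{i,j}$ is an exact antidifference: since the Faulhaber polynomial $T_j(a)=\sum_{z=1}^{a}z^{j-1}$ satisfies $T_j(a)-T_j(a-1)=a^{j-1}$, one checks that $B_{i,j}=P_j(x_{i+1})-P_j(x_i)$ with $P_j(a)=2T_j(a)-a^{j-1}$, a polynomial of degree $j$ and leading coefficient $2/j$. I would then augment $(B_{i,j})$ to the $k\times k$ matrix $C$ whose first column is all $1$'s and whose remaining columns are $P_j(x_i)$; performing the row operations $R_{i+1}\mapsto R_{i+1}-R_i$ and expanding along the first column shows $\det C=\det(B_{i,j})$, while the triangular change of basis from $1,P_1,\ldots,P_{k-1}$ to $1,x,\ldots,x^{k-1}$ gives $\det C=\bigl(\prod_{j=1}^{k-1}2/j\bigr)\Delta(x)=\frac{2^{k-1}}{(k-1)!}\Delta(x)$. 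This establishes the identity and hence, after reassembling the constants, the claimed formula $f_k(x)=2^{\binom{k+1}{2}}\prod_{1\le i<j\le k}(x_j-x_i)/(j-i)$.
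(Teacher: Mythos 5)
Your proof is correct, but there is nothing in the paper to compare it against: the paper does not prove this lemma, it recalls it as a known result of Mills, Robbins and Rumsey (Theorem 2 of their paper), with pointers to the alternative treatments in Elkies--Kuperberg--Larsen--Propp and in Fischer's later work. So what you have produced is a genuinely self-contained proof where the paper offers only a citation. Your argument checks out at every step: the peeling of the bottom row is the right decomposition, and the weight count $c(x;z)=1+\#\{j:x_j<z_j<x_{j+1}\}$ is correct because a strictly increasing interlacing row can meet the set $\{x_1,\ldots,x_k\}$ only at endpoints of the intervals $[x_j,x_{j+1}]$, and distinct $z_j$'s cover distinct endpoints (this also uses the paper's convention that the top singleton counts toward $N_+$, which is what makes your base case $f_1=2$ work). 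The decoupling observation --- that a violation of strict monotonicity forces $z_j=z_{j+1}=x_{j+1}$ and hence $\Delta(z)=0$ --- is exactly what licenses the multilinearity step, collapsing the sum to $\det(B_{i,j})$ with $B_{i,j}=2\sum_{z=x_i}^{x_{i+1}}z^{j-1}-x_i^{j-1}-x_{i+1}^{j-1}$; and your evaluation of that determinant via the antidifference polynomials $P_j(a)=2T_j(a)-a^{j-1}$ (degree $j$, leading coefficient $2/j$), the bordering by a column of ones, and the triangular change of basis to monomials correctly yields $\frac{2^{k-1}}{(k-1)!}\Delta(x_1,\ldots,x_k)$. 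The constants then reassemble as claimed, since $2^{\binom{k}{2}+1}\cdot 2^{k-1}=2^{\binom{k+1}{2}}$ and $(k-1)!\,\Delta(1,\ldots,k-1)=\Delta(1,\ldots,k)$. In spirit this row-by-row induction is close to the original Mills--Robbins--Rumsey derivation, so it is not exotic, but as a replacement for the paper's citation it is a clean, complete, and elementary argument.
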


\begin{pf*}{Proof of Theorem \ref{thm-mostimportant}}
Denote by $\mathcal{T}_n(x_1,\ldots,x_k)$ the set of complete
monotone triangles of order $n$ whose $k$th row is equal to
$(x_1,\ldots,x_k)$. From the remarks above, it follows that the
left-hand side of (\ref{eq:mostimportant}) is equal to
\[
2^{-{n+1\choose2}} \sum_{T\in\mathcal{T}_n(x_1,\ldots,x_k)}
2^{N_+(T)}.
\]
In addition, for a monotone triangle $T \in\mathcal{T}_n(x_1,\ldots
,x_k)$, define $T_{\mathrm{top}}$ and~$T_{\mathrm{bottom}}$ as the
two monotone triangles, of orders $k$ and $n-k$, respectively, where~%
$T_{\mathrm{top}}$ is comprised of the top $k$ rows of $T$, and
$T_{\mathrm{bottom}}$ is comprised of the top $n-k$ rows of the dual
triangle $T^*$. From Lemma \ref{lem-simpleobservation}, it follows
that the correspondence
\[
T \to(T_{\mathrm{top}}, T_{\mathrm{bottom}})
\]
defines a bijection between $\mathcal{T}_n(x_1,\ldots,x_k)$ and the
cartesian product
\mbox{$ \mathcal{A}\times\mathcal{B} $}, where $\mathcal{A}$ is the set of
monotone triangles with bottom row $(x_1,\ldots,x_k)$ and~$\mathcal
{B}$ is the set of monotone triangles with bottom row $(y_1,\ldots
,y_{n-k})$ (in the notation of Theorem~\ref{thm-mostimportant}). This
correspondence furthermore has the property that
\[
N_+(T) = N_+(T_{\mathrm{top}}) + N_+(T_{\mathrm{bottom}})
\]
[since $N_+(T_{\mathrm{top}})$ counts the number of $+1$ entries in
the first $k$ rows of the ASM corresponding to $T$, whereas
$N_+(T_{\mathrm{bottom}})$ counts the number $+1$'s in the last $n-k$ rows],
or equivalently that
$ 2^{N_+(T)} = 2^{N_+(T_{\mathrm{top}})} 2^{N_+(T_{\mathrm{bottom}})}$.
Combining these last observations, we get that the left-hand side of
(\ref{eq:mostimportant}) is equal to
\[
2^{-{n+1\choose2}}
\sum_{T_{\mathrm{top}} \in\mathcal{A}} 2^{N_+(T_{\mathrm{top}})}
\sum_{T_{\mathrm{bottom}} \in\mathcal{B}} 2^{N_+(T_{\mathrm{bottom}})},
\]
which by Lemma \ref{lem-vandermondecounting} is equal exactly to the
right-hand side of (\ref{eq:mostimportant}).
\end{pf*}

We remark that an equivalent version of
Theorem \ref{thm-mostimportant}, phrased in the language of domino
tilings and certain so-called zig--zag paths defined in terms of them,
is proved
by Johansson in \cite{johansson0} [see Proposition 5.14 in that paper
and equation (5.16) following it]. See also the subsequent papers
\cite{johansson1,johansson2} where Johansson proves many interesting
results about random domino tilings of the Aztec diamond by combining a
variant of (\ref{eq:mostimportant}) with ideas from the theory of
orthogonal polynomials and the theory of determinantal point processes.

\section{A large deviation principle} \label{sec-largedev}

We now turn from combinatorics to analysis, with the goal in mind being
to use Theorem \ref{thm-mostimportant} as the starting point for a
large deviation analysis of the behavior of $\pdomino$-random ASMs.
First, we define the space of functions on which our analysis takes
place. Fix $0< y< 1$. We wish to understand the behavior of the $k$th
row of the height matrix of a $\pdomino$-random ASM of order $n$ for
values of $k$ satisfying $k \approx y\cdot n$, when $n$ is large.

Define the space of \textit{$y$-admissible} functions to be
the set
\begin{eqnarray*}
&&\feasible_y = \{ f\dvtx[0,1]\to[0,1] \dvtx \mbox{$f$ is monotone
nondecreasing, $1$-Lipschitz,}
\\
&&\hspace*{161.1pt}\mbox{and satisfies $f(0)=0, f(1)=y$} \}.
\end{eqnarray*}
Define the space of \textit{admissible functions} as the
union of all the
$y$-admissible function spaces:
\[
\feasible= \bigcup_{y\in[0,1]} \feasible_y.
\]

We also define a discrete analogue of the admissible functions. Given
integers $0 \le k\le n$, a sequence $\mathbf{u} = (u_0,u_1,\ldots
,u_n)$ of integers
is called an \textit{$(n,k)$-admissible sequence} if it satisfies
\[
u_0=0,\qquad u_n = k\quad\mbox{and}\quad u_{i+1}-u_i \in\{0,1\}\qquad
\mbox{for all }0\le i\le n-1.
\]
Note that $(n,k)$-admissible sequences are exactly those that can
appear as the $k$th row of a height matrix $H(M)$ of an ASM $M\in\asm_n$.
We embed the $(n,k)$-admissible sequences in the space $\mathcal{F}_y$
for $y=k/n$, in the following way:
for each $(n,k)$-admissible sequence $\mathbf{u}$,
define a function $f_{\mathbf{u}} \dvtx[0,1]\to[0,1]$ as the unique
function having the values
\[
f_{\mathbf{u}}(j/n) = u_j / n,\qquad 0\le j\le n,
\]
and on each interval $[j/n, (j+1)/n]$ for $0\le j\le n-1$ being defined
as the linear interpolation of
the values on the endpoints of the interval; see Figure~\ref
{fig-usequence}. Clearly, $f_{\mathbf{u}}$ is a $(k/n)$-admissible
function. In fact, it is easy to see that the admissible functions are
precisely the limits of such functions in the uniform norm topology.

%
%
\begin{figure}

\includegraphics{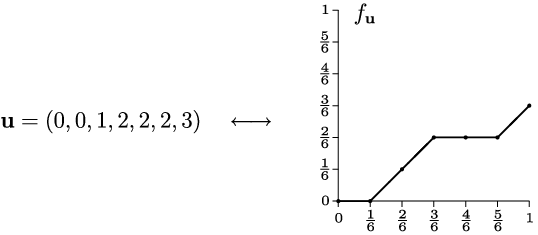}

\caption{A $(6,3)$-admissible sequence $\mathbf{u}$ and the
corresponding function
$f_{\mathbf{u}}$.}
\label{fig-usequence}
\end{figure}

With these definitions, we can now formulate the large deviation principle.

\begin{theorem}[(Large deviation principle for $\pdomino$-random ASMs)]
\label{thm-largedev}
Let $0 \le k\le n$, and let $\mathbf{u} = (u_0,u_1,\ldots,u_n)$ be an
$(n,k)$-admissible sequence.
Let $H(M)_k$ denote the $k$th row of a height matrix $H(M)$.
Then
%
%
\begin{eqnarray} \label{eq:largedevthm}
&&\pdomino[
M \in\asm_n \dvtx
H(M)_k =\mathbf{u} ] \nonumber\\[-8pt]\\[-8pt]
&&\qquad=
\exp\bigl(
-\bigl(1+o(1)\bigr) n^2 \bigl(I(f_{\mathbf{u}}) + \theta(k/n)\bigr)
\bigr),
\nonumber
\end{eqnarray}
where we define
\begin{eqnarray*}
\theta(y) &=& \frac12 y^2 \log y + \frac12 (1-y)^2 \log(1-y) +
\frac{2\log2-3}{2} y(1-y)+\frac32, \\
I(f) &=& -\int_0^1 \int_0^1 {\log}|s-t|f'(s)\bigl(f'(t)-1\bigr) \,ds
\,dt\qquad
(f\in\feasible).
\end{eqnarray*}
The $o(1)$ error term in (\ref{eq:largedevthm}) is uniform over all
$0\le k\le n$ and all
$(n,k)$-admissible sequences $\mathbf{u}$, as $n\to\infty$.
\end{theorem}
\begin{pf}
Let $1\le x_1<x_2<\cdots<x_k\le n$ be the positions of the $k$ ascents
in the sequence $(u_0,u_1,\ldots,u_n)$ (in the same sense defined
before, namely that
$u_{x_i}-u_{x_{i-1}}=1$), and let $1\le y_1<\cdots<y_{n-k}\le n$ be
the numbers in the complement
$\{1,\ldots,n\}\setminus\{x_1,\ldots,x_k\}$ arranged in increasing
order.\vadjust{\goodbreak}

By (\ref{eq:mostimportant}), we have
%
%
\begin{eqnarray} \label{eq:largedev}
&&
n^{-2} \log\pdomino[
M \in\asm_n \dvtx
H(M)_k =\mathbf{u} ]\nonumber\\[-2pt]
&&\qquad =
n^{-2} \left( \pmatrix{k+1\cr2} + \pmatrix{n-k+1\cr2} - \pmatrix
{n+1\cr2} \right) \log2
\nonumber\\[-10pt]\\[-10pt]
&&\qquad\quad{} - n^{-2} \sum_{1\le i<j \le k} \log(j-i)
- n^{-2} \sum_{1\le i<j \le n-k} \log(j-i)
\nonumber\\[-2pt]
&&\qquad\quad{}
+ n^{-2} \sum_{1\le i<j \le k} \log(x_j-x_i)
+ n^{-2} \sum_{1\le i<j \le n-k} \log(y_j-y_i).\nonumber
\end{eqnarray}
We estimate each of the summands. First, we have
%
%
\begin{eqnarray}\label{eq:estimate1}
&&n^{-2} \left( \pmatrix{k+1\cr2} + \pmatrix{n-k+1\cr2} - \pmatrix
{n+1\cr2} \right) \log2\nonumber\\[-2pt]
&&\qquad= \frac{\log2}{2} \biggl(\frac{k}{n}\biggr)^2 + \frac{\log2}{2}
\biggl(1-\frac{k}{n}\biggr)^2 - \frac{\log2}{2} + o(1)
\\[-2pt]
&&\qquad= -\log2 \cdot\frac{k}{n}\biggl(1-\frac{k}{n}\biggr) + o(1).\nonumber
\end{eqnarray}
Second, the sum $n^{-2} \sum_{1\le i<j\le k} \log(j-i)$ can be
rewritten as
%
%
\begin{eqnarray}\label{eq:estimate2}
&&n^{-2} \sum_{1\le i<j\le k} \log(j-i) \nonumber\\[-2pt]
&&\qquad= n^{-2} \sum_{d=1}^{k-1}
(k-d)\log d\nonumber\\[-2pt]
&&\qquad=
n^{-2} \sum_{d=1}^{k-1} (k-d) \log k
+
\biggl(\frac{k}{n}\biggr)^2 \sum_{d=1}^{k-1} \biggl(1-\frac
{d}{k}\biggr)\log\frac{d}{k} \cdot\frac{1}{k}
\\[-2pt]
&&\qquad=
\frac{k(k-1)}{2n^2} \log k + \biggl(\frac{k}{n}\biggr)^2 \int_0^1
(1-t)\log t \,dt + o(1)
\nonumber\\[-2pt]
&&\qquad=
\frac12 \biggl(\frac{k}{n}\biggr)^2 \log k -\frac34 \biggl(\frac
{k}{n}\biggr)^2 + o(1),\nonumber
\end{eqnarray}
where the error term $o(1)$ is uniform in $k$ as $n\to\infty$ (the
estimate for this sum is essentially the leading-order asymptotic
expansion for the so-called \textit{Barnes $G$-function},
related also to
the \textit{hyperfactorial}; for more detailed asymptotics
of these
special functions, see \cite{finch}, Section 2.15, page 135).
Similarly, replacing~$k$ by $n-k$ we get that
%
%
\begin{equation}\label{eq:estimate3}\qquad
n^{-2} \sum_{1\le i<j \le n-k} \log(j-i) = \frac12 \biggl(1-\frac
{k}{n}\biggr)^2 \log(n-k) -\frac34 \biggl(1-\frac{k}{n}\biggr)^2 +
o(1).\vadjust{\goodbreak}
\end{equation}
Finally, we estimate the terms in (\ref{eq:largedev}) that depend
directly on the sequence $\mathbf{u}$. The idea is to replace each
term $n^{-2} \log(x_j-x_i)$ with an integral of the form
$\iint\log(t-s)f_{\mathbf{u}}'(t)f_{\mathbf{u}}'(s) \,ds \,dt$ over a
certain region. Observe that for $X>1$ we have the (easily verifiable) identity
\begin{eqnarray*}
&&\int_0^1 \int_X^{X+1} \log(v-u) \,dv \,du \\
&&\qquad=\log X +
\biggl( \frac12 (X^2+1)\log\biggl(\frac{X^2-1}{X^2}\biggr) + X\log
\biggl(\frac{X+1}{X-1}\biggr)-\frac32 \biggr).
\end{eqnarray*}
When $X$ is large, this behaves like $ \log X + O(\frac
{1}{X})$. The integral is also defined and finite when $X=1$. So
we can write
\begin{eqnarray*}
&&n^{-2}\sum_{1\le i<j\le k} \log(x_j-x_i)\\
&&\qquad = n^{-2} \sum_{1\le i<j\le k} \int_{x_i-1}^{x_i} \int
_{x_j-1}^{x_j} \log(v-u) \,dv \,du
+ O\biggl(\sum_{1\le i<j\le k}
\frac{n^{-2}}{x_j-x_i}\biggr)
\\
&&\qquad=
\sum_{1\le i<j\le k} \int_{x_i-1}^{x_i} \int_{x_j-1}^{x_j} \log
(v-u) \,\frac{dv \,du}{n^2} + O\biggl(\frac{\log n}{n}\biggr)
\\
&&\qquad=
\sum_{1\le i<j\le k} \int_{x_i-1}^{x_i} \int_{x_j-1}^{x_j} \log
\biggl(\frac{v-u}{n}\biggr) \,\frac{dv \,du}{n^2}
+\log n \cdot\frac{k(k-1)}{2n^2} \\
&&\qquad\quad{} + O\biggl(\frac{\log
n}{n}\biggr).
%
\end{eqnarray*}
Now observe that $f_{\mathbf{u}}'(x)$ is equal to 1 if $(x_i-1)/n
<x<x_i/n$ for some $i$, or to $0$ otherwise; so this last expression
can be rewritten as
%
%
\begin{equation}\label{eq:theabove}
\iint_{R_n} \log(t-s) f_{\mathbf{u}}'(s) f_{\mathbf{u}}'(t) \,ds \,dt
+ \frac12 \biggl(\frac{k}{n}\biggr)^2 \log n
+ O\biggl(\frac{\log n}{n}\biggr),
\end{equation}
where the integral is over the region
\[
R_n = \bigcup_{1\le i<j\le n} \biggl[\frac{i-1}{n},\frac{i}{n}
\biggr] \times
\biggl[\frac{j-1}{n},\frac{j}{n}\biggr].
\]
The region of integration in (\ref{eq:theabove}) can be replaced with
the slightly larger region
\[
R = \{ (s,t)\in[0,1]\times[0,1] \dvtx s<t \},
\]
at the cost of an additional error which can be bounded in absolute
value by
\[
\int_0^1 dy \int_{y-1/n}^y |{\log}(y-x)| \,dx = \biggl|\int
_0^{1/n}\log t \,dt \biggr| = O\biggl(\frac{\log n}{n}\biggr).
\]
To summarize, after this change in the region of integration and, in
addition, after symmetrizing the region of integration for convenience,
we have shown that
%
%
\begin{eqnarray}\label{eq:estimate4}
n^{-2}\sum_{1\le i<j\le k} \log(x_j-x_i)
&=&
\frac12 \int_0^1 \int_0^1 {\log}|t-s| f_{\mathbf{u}}'(s) f_{\mathbf
{u}}'(t) \,ds \,dt\nonumber\\[-8pt]\\[-8pt]
&&{} + \frac12 \biggl(\frac{k}{n}\biggr)^2 \log n
+ O\biggl(\frac{\log n}{n}\biggr).
\nonumber
\end{eqnarray}
Symmetrically, following exactly the same reasoning for the last sum in
(\ref{eq:largedev}) we get the similar estimate
%
%
\begin{eqnarray}\label{eq:estimate5}\quad
&&n^{-2}\sum_{1\le i<j\le n-k} \log(y_j-y_i)
\nonumber\\
&&\qquad=\frac12 \int_0^1 \int_0^1 {\log}|t-s| \bigl(1-f_{\mathbf{u}}'(s)\bigr)
\bigl(1-f_{\mathbf{u}}'(t)\bigr) \,ds \,dt
+ \frac12 \biggl(1-\frac{k}{n}\biggr)^2 \log n\\
&&\qquad\quad{} + O\biggl(\frac{\log n}{n}\biggr).
\nonumber
\end{eqnarray}
It remains to plug the estimates (\ref{eq:estimate1}), (\ref
{eq:estimate2}), (\ref{eq:estimate3}), (\ref{eq:estimate4}) and (\ref
{eq:estimate5}) into (\ref{eq:largedev}), and simplify.
Denoting $y=k/n$, and using the integral evaluation
\[
\frac12 \int_0^1 \int_0^1 {\log}|t-s|\,ds \,dt = -\frac34,
\]
this gives that the left-hand side of (\ref{eq:largedev}) is equal to
%
%
\begin{eqnarray}
&&-\frac34 + \frac34 y^2 + \frac34 (1-y)^2 - \log2\cdot y(1-y) \nonumber\\
&&\quad{}-\frac
12 y^2 \log y - \frac12 (1-y)^2\log(1-y) \nonumber\\
&&\quad{}+ \int_0^1 \int_0^1 {\log}|t-s| f_{\mathbf{u}}'(s) f_{\mathbf{u}}'(t)
\,ds \,dt\\
&&\quad{} - \int_0^1 \int_0^1 {\log}|t-s| f_{\mathbf{u}}'(s) \,ds \,dt + o(1)
\nonumber\\
&&\qquad= -\theta(y) - I(f_{\mathbf{u}}) + o(1)
\nonumber
\end{eqnarray}
as claimed.
\end{pf}

\section{The variational problem and its solution}
\label{sec-variational-solution}

Fix $0<y<1$. Motivated by Theorem \ref{thm-largedev}, we now turn our
attention to the problem of minimizing the integral functional $I(f)$
over the appropriate class of $y$-admissible functions. In the next
section we will show how this implies a limit shape result for
$\pdomino$-random ASMs.\vadjust{\goodbreak}

The precise variational problem that we will solve is the following:
\begin{variational} \label{vari-orig}
For a given $0<y<1$, find the function $f_y^*$ that minimizes $I(f)$
over all functions $f\in\feasible_y$.
\end{variational}

Variational Problem \ref{vari-orig} is a variant of a class of
variational problems that have appeared in several random
combinatorial models (see, e.g.,
\cite{cohnlarsenpropp,loganshepp,pittelromik,vershikkerov1,vershikkerov2}).
Such problems bear a strong resemblance to classical physical problems
of finding the distribution of electrostatic charges subject to various
constraints in a one-dimensional space, as well as to problems of
finding limiting eigenvalue distributions in random matrix theory.
However, the variational problems arising from combinatorial models
usually have nonphysical constraints that make the analysis trickier.
In particular, in several of the works cited above, the presence of
such constraints required the authors to first (rather ingeniously)
\textit{guess} the solution. Once the solution was conjectured, it was
possible to verify that it is indeed the correct one using fairly
standard techniques. Cohn, Larsen and Propp, who derived the limit
shape of a random boxed plane partition, ask (see Open Question 6.3 in
\cite{cohnlarsenpropp}) whether there exists a method of solution for
their problem that does not require guessing the solution.

In \cite{pittelromik}, it was argued, however, that when dealing with
such problems, it is not necessary to guess the solution, since a
well-known formula in the theory of singular integral equations for
inverting a Hilbert transform on a finite interval actually enables
mechanically \textit{deriving} the solution rather than guessing it,
once certain intuitively plausible assumptions on the form of the
solution are made. Here, we demonstrate again the use of this more
systematic approach by using it to solve our variational problem. As an
added bonus, the solution rather elegantly turns out to be nearly
identical to the solution of the variational problem for the square
Young tableaux case (although we see no a priori reasons why this
should turn out to be the case), and we are able to make use of certain
nontrivial computations that appeared in \cite{pittelromik}, which
further simplifies the analysis.

Our goal in the rest of this section will be to prove the following theorem.

\begin{theorem}\label{thm-largedevsolution1} Define
%
%
\begin{eqnarray}\label{eq:def-zxy}
Z(x,y) &=&
\frac{2}{\pi}\biggl[(x-1/2) \arctan\biggl( \frac{\sqrt{
1/4-(x-1/2)^2-(y-1/2)^2}}{1/2-y} \biggr)
\nonumber\\
&&\hspace*{14.2pt}{} + \frac{1}{2} \arctan\biggl( \frac
{2(x-1/2)(1/2-y)}{\sqrt{1/4-(x-1/2)^2-(y-1/2)^2}} \biggr)
\\
&&\hspace*{14.2pt}{} - (1/2-y) \arctan\biggl( \frac
{x-1/2}{\sqrt{1/4-(x-1/2)^2-(y-1/2)^2}} \biggr) \biggr].
\nonumber
\end{eqnarray}
For $0<y<1/2$, the solution $f_y^*$ to Variational Problem \ref
{vari-orig} is given by
%
%
\begin{equation}\label{eq:asm-limitshape}\qquad
f_y^*(x) = \cases{
0, &\quad $\displaystyle 0 \le x \le\frac{1-2\sqrt{y(1-y)}}{2}$, \vspace*{2pt}\cr
\displaystyle \frac{y}{2}+\frac12 Z(x,y), &\quad $\displaystyle \frac{1-2\sqrt{y(1-y)}}{2} < x <
\frac{1+2\sqrt{y(1-y)}}{2}$,
\vspace*{2pt}\cr
y, &\quad $\displaystyle \frac{1+2\sqrt{y(1-y)}}{2} \le x \le1$.}
\end{equation}
For $y=1/2$, the solution is given by
\[
f_{1/2}^*(x) = \frac{x}{2}.
\]
For $y>1/2$ the solution is expressed in terms of the solution for
$1-y$ by
\[
f_y^* = x-f_{1-y}^*.
\]
Moreover, for all $0<y<1$ we have
\[
I(f_y^*) = -\theta(y).
\]
\end{theorem}

As a first step, for convenience we reformulate the variational problem
slightly to bring it to a more symmetric form, by replacing each $f \in
\feasible_y$ by the function
%
%
\begin{equation}\label{eq:coordchange}
g(x) = 2 f(x)-x.
\end{equation}
It is easy to check how the class of $y$-admissible functions
and the functional~$I(\cdot)$ transform under this mapping. The result
is the following equivalent form of our variational problem.
\begin{variational} \label{vari-reform}
For $0<y<1$, define the space of functions
\[
\mathcal{G}_y = \{ g\dvtx[0,1]\to[-1,1] \dvtx g(0) = 0,g(1) = 2y-1\mbox{,
 and $g$ is 1-Lipschitz} \}
\]
and the integral functional
\[
J(g) = -\int_0^1 \int_0^1 g'(s) {g'(t) \log}|s-t| \,ds \,dt.
\]
Find the function $g_y^* \in\mathcal{G}_y$ that minimizes the
functional $J$ over all functions $g\in\mathcal{G}_y$.
\end{variational}

The reader may verify that if $f\in\mathcal{F}_y$ and $g \in\mathcal
{G}_y$ are related by (\ref{eq:coordchange}), then the integral
functionals $I$ and $J$ are related by
\[
I(f) = \tfrac14 J(g) - \tfrac38.
\]
This implies that the following theorem is an equivalent version of
Theorem~\ref{thm-largedevsolution1}.
{\renewcommand{\thetheoremm}{7$'$}
\begin{theoremm}\label{thm-largedevsolution2}
For $0<y<1/2$, the solution $g_y^*$ to Variational Problem~\ref
{vari-reform} is given by
\[
g_y^*(x) = \cases{
-x, &\quad $\displaystyle 0 \le x \le\frac{1-2\sqrt{y(1-y)}}{2}$, \vspace*{2pt}\cr
y-x+Z(x,y), &\quad $\displaystyle \frac{1-2\sqrt{y(1-y)}}{2} < x < \frac{1+2\sqrt
{y(1-y)}}{2}$,
\vspace*{2pt}\cr
2y-x, &\quad $\displaystyle \frac{1+2\sqrt{y(1-y)}}{2} \le x \le1$,}
\]
where $Z(x,y)$ is defined in (\ref{eq:def-zxy}).
For $y=1/2$, the solution is given by $ g_{1/2}^*(x) \equiv0$.
For $y>1/2$ the solution is expressed in terms of the solution for
$1-y$ by
$ g_y^* = -g_{1-y}^*$.
Moreover, for all $0<y<1$ we have
\[
J(g_y^*) = -4\theta(y)+\tfrac32.
\]
\end{theoremm}}

\vspace*{-\baselineskip}

\begin{pf}
We now concentrate our efforts on proving
Theorem \ref{thm-largedevsolution2}. First, in the following lemma we
recall some basic facts about the space $\mathcal{G}_y$ and the
functional~$J$. We omit the proofs, since they are relatively simple
and essentially the same claims, with minor differences in the
coordinate system, were proved in \cite{pittelromik}. (See also
\cite{cohnlarsenpropp} where similar facts are proved.)
\begin{lemma}
\label{lem-alreadyproved}
\begin{longlist}
\item The space $\mathcal{G}_y$ is compact in the uniform norm.

\item The functional $J$ on $\mathcal{G}=\bigcup_{0<y<1} \mathcal{G}_y$
is a quadratic functional which can be written as
\[
J(g) = \langle g, g \rangle,
\]
where $ \langle\cdot, \cdot\rangle$ is defined by
\[
\langle g, h \rangle= -\int_0^1 \int_0^1 g'(s) {h'(t) \log}|s-t| \,ds
\,dt.
\]
The bilinear form $\langle\cdot, \cdot\rangle$ is defined for any
two Lipschitz functions $g, h$, is continuous on $\mathcal{G}$ with
the uniform norm, and is positive semidefinite in the sense that
$\langle g,g \rangle\ge0$ for any Lipschitz function $g$. The
restriction of $\langle\cdot, \cdot\rangle$ to~$\mathcal{G}_y$ is
positive-definite.

\item $J$ is strictly convex on $\mathcal{G}_y$. Therefore, a
minimizer $g_y^*$ exists and is unique.
\end{longlist}
\end{lemma}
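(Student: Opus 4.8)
The plan is to establish the three parts in order, using Arzel\`a--Ascoli for compactness, a weak-$*$ approximation argument for continuity of the bilinear form, and classical logarithmic potential theory for the positivity statements, after which strict convexity and existence/uniqueness of the minimizer follow formally. For part (i), I would invoke Arzel\`a--Ascoli: every $g\in{\cal G}_y$ takes values in $[-1,1]$ and is $1$-Lipschitz, so ${\cal G}_y$ is uniformly bounded and equicontinuous, hence relatively compact in $(C[0,1],\|\cdot\|_\infty)$; closedness is immediate because a uniform limit of $1$-Lipschitz functions is $1$-Lipschitz and the range constraint $g([0,1])\subseteq[-1,1]$ and the endpoint values $g(0)=0$, $g(1)=2y-1$ all survive passage to the limit. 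For part (ii), bilinearity and symmetry of $\langle\cdot,\cdot\rangle$ are read off directly from the formula together with the symmetry of $\log|s-t|$, and the form is well defined for any Lipschitz $g,h$ since $g',h'\in L^\infty[0,1]$ while $\log|s-t|\in L^1([0,1]^2)$, so the double integral converges absolutely.

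The genuinely delicate point, and the one I expect to be the main obstacle, is the continuity of $\langle\cdot,\cdot\rangle$ in the uniform norm, because uniform convergence $g_n\to g$ gives no direct control over the derivatives $g_n'$ appearing in the integrand. Here I would exploit that on ${\cal G}$ the derivatives are uniformly bounded, $\|g_n'\|_\infty\le1$, so that $g_n\to g$ uniformly forces $g_n'\rightharpoonup g'$ in the weak-$*$ topology of $L^\infty[0,1]=(L^1[0,1])^*$. The key observation is that for fixed Lipschitz $h$ the function $s\mapsto K_h(s):=-\int_0^1 h'(t)\log|s-t|\,dt$ lies in $L^\infty[0,1]\subseteq L^1[0,1]$, since $\sup_s\int_0^1\bigl|\log|s-t|\bigr|\,dt<\infty$, so that $\phi\mapsto\int_0^1\phi\,K_h$ is weak-$*$ continuous. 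Writing $\langle g_n,g_n\rangle-\langle g,g\rangle=\langle g_n-g,g_n\rangle+\langle g,g_n-g\rangle$ and setting $\psi_n=g_n'-g'$, the second term equals $\int\psi_n K_g$ and tends to $0$ by weak-$*$ convergence against $K_g\in L^1$; for the first term, $\int\psi_n K_{g_n}$, I would note that $K_{g_n}\to K_g$ in $L^1$ by dominated convergence (pointwise convergence plus a uniform bound) and split $\int\psi_n K_{g_n}=\int\psi_n(K_{g_n}-K_g)+\int\psi_n K_g$, both of which vanish. This yields joint continuity of the form, and in particular continuity of $J$.

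The positivity statements I would derive from logarithmic potential theory applied to the signed measure $\mu_g=g'(s)\,ds$, for which $\langle g,g\rangle=-\iint\log|s-t|\,d\mu_g(s)\,d\mu_g(t)$ is precisely the logarithmic energy. Since $[0,1]$ has logarithmic capacity $1/4<1$, I would decompose $\mu_g=m\,\omega+\nu$, where $\omega$ is the equilibrium (arcsine) measure of $[0,1]$, $m=\mu_g([0,1])=g(1)-g(0)$, and $\nu$ has total mass zero; using that the equilibrium potential is constant on $[0,1]$ gives the orthogonality $\langle\omega,\nu\rangle=0$, whence $\langle g,g\rangle=m^2(-\log\tfrac14)+I(\nu)$. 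Both summands are nonnegative --- the first because $\mathrm{cap}<1$, the second by the classical positive-definiteness of the logarithmic energy on mass-zero measures (equivalently, the Fourier representation $I(\nu)=c\int|\hat\nu(\xi)|^2/|\xi|\,d\xi$) --- which gives $\langle g,g\rangle\ge0$ for every Lipschitz $g$. On ${\cal G}_y$ the restriction is positive-definite: for $g_1\ne g_2\in{\cal G}_y$ the difference $\psi=g_1-g_2$ vanishes at both endpoints, so $\mu_\psi$ has mass zero and $\langle\psi,\psi\rangle=I(\mu_\psi)>0$ unless $\psi'\equiv0$, which with $\psi(0)=0$ forces $\psi\equiv0$.

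Part (iii) is then formal. Convexity of ${\cal G}_y$ is clear, and the quadratic identity $J(\lambda g_1+(1-\lambda)g_2)=\lambda J(g_1)+(1-\lambda)J(g_2)-\lambda(1-\lambda)\langle g_1-g_2,g_1-g_2\rangle$ combined with the positive-definiteness on ${\cal G}_y$ just established shows that $J$ is strictly convex there. Existence of a minimizer follows since $J$ is continuous (part (ii)) on the compact set ${\cal G}_y$ (part (i)), and uniqueness follows from strict convexity. Beyond the continuity argument, the only point requiring care is obtaining the positive-definiteness cleanly; invoking $\mathrm{cap}([0,1])<1$ (or, as in \cite{pittelromik}, working from the outset with mass-zero differences) is what makes this routine.
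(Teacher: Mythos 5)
Your proposal is correct. There is no in-paper argument to compare it against step by step: the author explicitly omits the proof of this lemma, deferring to \cite{pittelromik} (see also \cite{cohnlarsenpropp}) where the analogous claims are proved in a slightly different coordinate system, and your argument follows essentially that same standard route --- Arzel\`a--Ascoli for (i), absolute convergence from $\log|s-t|\in L^1([0,1]^2)$ for well-definedness, and logarithmic potential theory for positivity. The two genuinely delicate points are exactly the ones you isolate, and you resolve both correctly. First, uniform-norm continuity of the form cannot come from uniform convergence alone; your use of the bound $\|g_n'\|_\infty\le 1$, valid on ${\cal G}$, to upgrade $g_n\to g$ uniformly into weak-$*$ convergence $g_n'\rightharpoonup g'$ in $L^\infty$ (integration by parts against $C^1$ test functions plus density in $L^1$), paired with the bounded potentials $K_h$ and the $L^1$ convergence $K_{g_n}\to K_g$, is sound --- and it is precisely because this argument needs the Lipschitz bound that continuity is asserted only on ${\cal G}$ rather than on all of $C[0,1]$. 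Second, the blanket claim $\langle g,g\rangle\ge 0$ for \emph{every} Lipschitz $g$ (not merely for mass-zero differences) genuinely requires the capacity input $\mathrm{cap}([0,1])=\tfrac14<1$: your decomposition $\mu_g=m\omega+\nu$, the orthogonality $\langle\omega,\nu\rangle=0$ from constancy of the equilibrium potential on $[0,1]$, and $I(\omega)=\log 4>0$ handle this cleanly, while strict positivity of the energy on mass-zero measures gives positive-definiteness on differences of elements of ${\cal G}_y$ (which is the correct reading of that clause, since ${\cal G}_y$ is not a linear space), after which (iii) follows formally exactly as you say.
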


The lemma already\vspace*{1pt} solves the problem in the case $y=1/2$, where clearly
$g_{1/2}^*\equiv0$ is the minimizer for $J$ among \textit{all}
Lipschitz functions, and in particular on $\mathcal{G}_{1/2}$. It is
also easy to see that a function $g$ is the minimizer for $J$ on
$\mathcal{G}_y$ if and only if $-g$ is the minimizer on $\mathcal
{G}_{1-y}$. So we may assume for the rest of the discussion that
$y<1/2$.\vadjust{\goodbreak}

With these preparations, we can start the analysis. We need to
minimi\-ze~%
$J(g)$ under the constraints $g \in\mathcal{G}_y$, which we rewrite as:

\begin{longlist}
\item$g(0) = 0$;
\item$g$ is differentiable almost everywhere and $g'$ satisfies
%
%
\begin{equation}\label{eq:inequalityconstraint}
-1 \le g' \le1;
\end{equation}
\item$\int_0^1 g'(x) \,dx = 2y-1$.
\end{longlist}
To address the third constraint, we consider $J$ as being defined on
the larger space $\mathcal{G}$ and form the Lagrangian
\[
\mathcal{L}(g,\lambda) = J(g) - \lambda\int_0^1 g'(x) \,dx,
\]
where $\lambda$ is a Lagrange multiplier. Minimizing $J$ under this
constraint leads, via the usual recipe for constrained optimization, to
the equation
%
%
\begin{equation}\label{eq:secondcondition}
W(s) := 
-2\int_0^1 {g'(t)\log}|s-t| \,dt - \lambda= 0.
\end{equation}
The reason for this is that, informally, $W(s)$ as defined above can be
thought of as ``the partial derivative of $\mathcal{L}$ with respect
to $g'(s)$'' [where we think of~$\mathcal{L}$ as a~function of the
uncountably many variables $(g'(s))_{s\in[0,1]}$, which is a~standard point of view in the variational calculus].

Relation (\ref{eq:secondcondition}) should hold whenever $g'(s)$ is
defined and is in $(-1,1)$. However, because of constraint (\ref
{eq:inequalityconstraint}), the condition
will be different when $g'=-1$ or \mbox{$g'=1$}. The correct condition (the
so-called ``complementary slackness'' condition) is given by the
following lemma.
\begin{lemma} \label{lem-sufficient}
If $g\in\mathcal{G}_y$ and for some real number $\lambda$ the
function $W(s)$ defined in (\ref{eq:secondcondition}) satisfies
%
%
\begin{equation} \label{eq:sufficientconds}
W(s)\mbox{ is }\cases{
= 0, &\quad if $g'(s) \in(-1,1)$, \cr
\ge0, &\quad if $g'(s) = -1$, \cr
\le0, &\quad if $g'(s) = 1$,}
\end{equation}
then $g=g_y^*$ is the minimizer for $J$ in $\mathcal{G}_y$.
\end{lemma}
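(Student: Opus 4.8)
The plan is to recognize this as the sufficiency half of a standard convex-optimization (complementary slackness / KKT) argument, and to exploit the two structural facts from Lemma~\ref{lem-alreadyproved}: that $J(g)=\langle g,g\rangle$ for the symmetric bilinear form $\langle\cdot,\cdot\rangle$, and that this form is positive semidefinite on all Lipschitz functions (hence $J$ is convex, and strictly convex on ${\cal G}_y$). Let $g\in{\cal G}_y$ satisfy the hypotheses \eqref{eq:sufficientconds} for some $\lambda$, and let $h\in{\cal G}_y$ be an arbitrary competitor. Expanding the quadratic form around $g$ gives
$$ J(h)-J(g) = 2\langle g, h-g\rangle + \langle h-g, h-g\rangle. $$
Since $h-g$ is Lipschitz (a difference of $1$-Lipschitz functions), the term $\langle h-g,h-g\rangle$ is nonnegative by the positive semidefiniteness in Lemma~\ref{lem-alreadyproved}(ii). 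Thus everything reduces to proving $\langle g, h-g\rangle \ge 0$.

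To handle the cross term, I would use the symmetry of the kernel $\log|s-t|$ to write
$$ \langle g, h-g\rangle = -\int_0^1 \big(h'(t)-g'(t)\big)\left(\int_0^1 g'(s)\log|s-t|\,ds\right)dt, $$
and then substitute the inner integral via the definition \eqref{eq:secondcondition} of $W$, namely $\int_0^1 g'(s)\log|s-t|\,ds = -W(t)-\lambda$. This yields
$$ \langle g, h-g\rangle = \int_0^1 \big(h'(t)-g'(t)\big)\,W(t)\,dt \;+\; \lambda\int_0^1 \big(h'(t)-g'(t)\big)\,dt. $$
The Lagrange-multiplier term vanishes because $g,h\in{\cal G}_y$ forces $\int_0^1 g' = \int_0^1 h' = 2y-1$; this is precisely the role played by introducing $\lambda$ through the constrained Lagrangian. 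So it remains to show that $\int_0^1 (h'-g')\,W \ge 0$.

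The crux is then a pointwise (a.e.) sign analysis of the integrand $(h'(t)-g'(t))\,W(t)$, combining the complementary slackness conditions \eqref{eq:sufficientconds} with the $1$-Lipschitz bound $-1\le h'\le 1$. Where $g'(t)\in(-1,1)$ the factor $W(t)=0$; where $g'(t)=-1$ we have $W(t)\ge 0$ while $h'(t)-g'(t)\ge 0$; and where $g'(t)=1$ we have $W(t)\le 0$ while $h'(t)-g'(t)\le 0$. In every case the product is $\ge 0$, so the integral is nonnegative and hence $J(h)\ge J(g)$. As $h\in{\cal G}_y$ was arbitrary, $g$ is a minimizer, and the strict convexity from Lemma~\ref{lem-alreadyproved}(iii) identifies it as the unique minimizer $g_y^*$.

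I expect the genuinely routine parts to be the quadratic expansion and the substitution (which only require the symmetry of the kernel and the absolute convergence of the double integral, guaranteed by the local integrability of $\log$ and the boundedness of $g',h'$). The one point deserving care — and the conceptual heart of the proof — is the three-case sign check in the last paragraph: one must verify that the cases $g'(t)\in\{-1\}$, $g'(t)=1$, $g'(t)\in(-1,1)$ exhaust almost every $t$, and that the $1$-Lipschitz constraint on $h$ supplies exactly the correct one-sided inequality for $h'-g'$ to match the sign of $W$ dictated by \eqref{eq:sufficientconds}. This elementary-but-delicate matching of signs is what makes the complementary slackness hypothesis sufficient rather than merely necessary.
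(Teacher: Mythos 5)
Your proof is correct and follows essentially the same route as the paper's: both rest on the pointwise sign check $(h'-g')W\ge 0$ from complementary slackness and the $1$-Lipschitz bound, the cancellation of the $\lambda$-term via $\int_0^1 h'=\int_0^1 g'=2y-1$, and the quadratic expansion of $J$ together with positive semidefiniteness of $\langle\cdot,\cdot\rangle$. The only difference is organizational (you expand the quadratic form first and then reduce to the cross term, while the paper derives the cross-term inequality first), which is immaterial.
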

\begin{pf} We copy the proof almost verbatim from \cite{pittelromik},
Lemma 7. If $h\in\mathcal{G}_y$, then in particular $h$ is
$1$-Lipschitz, so
\[
\bigl(h'(s)-g'(s)\bigr) W(s) \ge0
\]
for all $s$ for which this is defined. So
\[
\int_0^1 h'(s)W(s) \,ds \ge\int_0^1 g'(s)W(s) \,ds
\]
or in other words
\[
2 \langle g, h \rangle- \lambda(2y-1) \ge2 \langle g,g \rangle-
\lambda(2y-1),\vadjust{\goodbreak}
\]
which shows that $\langle g,h \rangle\ge\langle g,g \rangle$.
Therefore we get, using Lemma \ref{lem-alreadyproved}(ii), that
\[
\langle h,h \rangle= \langle g,g \rangle+ 2 \langle g,h-g\rangle+
\langle h-g,h-g \rangle\ge\langle g,g\rangle
\]
as claimed.
\end{pf}

Having established a \textit{sufficient} condition [comprised of the
three separate conditions in (\ref{eq:sufficientconds})] for a
function to be a minimizer, we first try to satisfy condition (\ref
{eq:secondcondition}) and save the other conditions for later. Based on
intuition that comes from the problem's connection to the combinatorial
model, we make the assumption that the minimizer $g$ is piecewise
smooth and satisfies
%
%
\begin{eqnarray}
g'(s) &\in& (-1,1) \qquad\mbox{if }s\in\biggl[
\frac{1-\beta}{2}, \frac{1+\beta}{2} \biggr], \\
\label{eq:theassumption}
g'(s) &=& -1 \qquad\mbox{if }s \notin\biggl[ \frac{1-\beta
}{2}, \frac{1+\beta}{2} \biggr],
\end{eqnarray}
where
\[
\beta= 2\sqrt{y(1-y)}.
\]
Note that $g'(s)=-1$ translates [via (\ref{eq:coordchange})] to
$f'(s)=0$ in the original space~$\mathcal{F}_y$ of $y$-admissible
functions, which corresponds to having no ascents (or very few ascents)
in the vicinity of the scaled position $(s,y)$ in the height matrix of
the ASM. Our knowledge of the endpoints of the interval in which
$g'(s)>-1$ is related to our foreknowledge of the arctic circle
theorem, and one might raise the criticism that this constitutes a
``guess.'' However, the analysis in~\cite{pittelromik} shows that it
would be possible to complete the solution even without knowing this
function in advance; here, we guess its value (which actually can be
easily guessed based on empirical evidence) so as to simplify the
analysis slightly.

Substituting this new knowledge about $g$ into (\ref
{eq:secondcondition}) gives the equation
\begin{eqnarray*}
&&
-\int_{({1-\beta})/{2}}^{({1+\beta})/{2}} {g'(t) \log}|s-t| \,dt\\
&&\qquad= \frac12 \lambda- s\log s - (1-s)\log(1-s)
+ \biggl(s-\frac
{1-\beta}{2}\biggr) \log\biggl(s-\frac{1-\beta}{2}\biggr)
\\
&&\qquad\quad{} + \biggl(\frac{1+\beta}{2}-s\biggr) \log\biggl(\frac{1+\beta
}{2}-s\biggr)
- \beta,\qquad s\in\biggl(\frac{1-\beta}{2}, \frac{1+\beta
}{2}\biggr).
\end{eqnarray*}
Differentiating with respect to $s$ then gives
%
%
\begin{eqnarray}\label{eq:hilberttrans}
-\int_{({1-\beta})/{2}}^{({1+\beta})/{2}} \frac{g'(t)}{s-t}
\,dt &=& - \log s + \log(1-s)\nonumber\\[-8pt]\\[-8pt]
&&{} + \log\biggl(s-\frac{1-\beta}{2}\biggr)
-\log\biggl(\frac{1+\beta}{2}-s\biggr).
\nonumber
\end{eqnarray}
So, just like in the analysis in \cite{pittelromik}, we have reached
the problem of inverting a Hilbert transform on a finite interval (the
so-called \textit{airfoil equation}). Moreover, the function
whose inverse
Hilbert transform we want to compute is very similar to the one that
appeared in \cite{pittelromik}---in fact, up to scaling factors only
the signs of some of the terms are permuted, and in \cite{pittelromik}
there is an extra term equal to the Lagrange multiplier $\lambda$.

Now recall that in fact the general form of the solution of equations
of this type is known. The following theorem appears in
\cite{estradakanwal}, Section
3.2, page~74 (see also~\cite{porterstirling}, Section 9.5.2):
\begin{theorem}\label{thm-airfoil}
The general solution of the airfoil equation
\[
\frac{1}{\pi} \int_{-1}^1 \frac{h(u)}{u-v} \,du = p(v),\qquad |v|<1,
\]
with the integral understood in the principal value sense, and $h$
satisfying a~H\"older condition, is given by
\[
h(v) = \frac{1}{\pi} \frac{1}{\sqrt{1-v^2}} \int_{-1}^1 \frac
{\sqrt{1-u^2} p(u)}{v-u} \,du + \frac{c}{\sqrt{1-v^2}}
\]
for some $c$.
\end{theorem}

Now set
%
%
\begin{equation}\label{eq:relation-h-p}
h(v)=g'\bigl((1+\beta v)/2\bigr).
\end{equation}
This function should satisfy
\[
\int_{-1}^1 \frac{h(u)}{u-v} \,du = \log\biggl(\frac{1-\beta
u}{2}\biggr) - \log\biggl(\frac{1+\beta u}{2}\biggr)
+\log(1+u)-\log(1-u),
\]
so, applying Theorem \ref{thm-airfoil}, we get the equation
\begin{eqnarray*}
h(v) &=& \frac{1}{\pi^2} \frac{1}{\sqrt{1-v^2}} \int_{-1}^1 \frac
{\sqrt{1-u^2}}{v-u}\biggl[ \log\biggl(\frac{1+u}{1-u}\biggr)
+\log\biggl(\frac{1-\beta u}{1+\beta u}\biggr)\biggr] \,du \\
&&{} +
\frac{c}{\sqrt{1-v^2}},
\end{eqnarray*}
where $c$ is an arbitrary constant. This can be written as
%
%
\begin{equation} \label{eq:slightlydifferent}
h(v) = \frac{1}{\pi^2\sqrt{1-v^2}}\bigl(\mathcal{I}(v,1/\beta) +
\mathcal{I}(-v,1/\beta)\bigr) + \frac{c}{\sqrt{1-v^2}},
\end{equation}
where $\mathcal{I}$ is defined by
\[
\mathcal{I}(\xi, \gamma) = \int_{-1}^1 \frac{\sqrt{1-\eta
^2}}{\xi-\eta} \log\biggl(\frac{1+\eta}{\gamma+\eta}\biggr)
\,d\eta
\]
and is evaluated in \cite{pittelromik}, Lemma 8, as
\begin{eqnarray*}
\mathcal{I}(\xi, \gamma) &=& \pi\Biggl[ 1-\gamma+\sqrt{\gamma
^2-1}-\xi\operatorname{arccosh}(\gamma)
\\
&&\hspace*{12pt}{}-2\sqrt{1-\xi^2}\arctan\sqrt{\frac{(\gamma
-1)(1-\xi)}{(\gamma+1)(1+\xi)}}
\Biggr].
\end{eqnarray*}
Therefore we get that
\begin{eqnarray*}
h(v) &=& \frac{1}{\pi\sqrt{1-v^2}} \biggl(c + \frac{\beta-1+\sqrt
{1-\beta^2}}{\beta}\biggr)
\\
&&{} - \frac{2}{\pi}\Biggl( \arctan\sqrt{\frac{(\beta
^{-1}-1)(1-v)}{(\beta^{-1}+1)(1+v)}}
+ \arctan\sqrt{\frac{(\beta^{-1}-1)(1+v)}{(\beta
^{-1}+1)(1-v)}}\Biggr).
\end{eqnarray*}
Since $c$ is an arbitrary constant, we see that the only sensible
choice that will allow $h$ to be a bounded function on the interval
$(-1,1)$ is that of $c=-(\beta-1+\sqrt{1-\beta^2})/\beta$.
So we have
\[
h(v) = - \frac{2}{\pi}\Biggl( \arctan\sqrt{\frac{(\beta
^{-1}-1)(1-v)}{(\beta^{-1}+1)(1+v)}}
+ \arctan\sqrt{\frac{(\beta^{-1}-1)(1+v)}{(\beta
^{-1}+1)(1-v)}}\Biggr).
\]

At this point, it is worth pointing out that in (\ref
{eq:slightlydifferent}), if we had the \textit{difference} of the two
$\mathcal{I}$ integrals instead of their sum, we would get at the end
(up to some trivial scaling factors that are due to the use of
different coordinate systems) exactly the function from the paper
\cite{pittelromik} that solves the variational problem for random square
Young tableaux! (Compare with equation (36) in \cite{pittelromik} and
subsequent formulas.) Thus, while the variational problems arising from
these two combinatorial models are not exactly isomorphic (which would
be perhaps less surprising), they are in some sense \textit{nearly}
equivalent. It would be interesting to understand if this phenomenon
has a conceptual explanation of some sort, but we do not see one at
present.

Simplifying the expression for $h$ using the sum-of-arctangents identity
\[
\arctan X + \arctan Y = \arctan\frac{X+Y}{1-XY}
\]
gives
\[
h(v) = -\frac{2}{\pi}\arctan\sqrt{\frac{1-\beta^2}{\beta^2-\beta
^2 v^2}}.
\]
Going back to the original function $g$ related to $h$ via (\ref
{eq:relation-h-p}), we get that
\begin{eqnarray*}
g'(s) &=& h\bigl((2s-1)/\beta\bigr) = -\frac{2}{\pi} \arctan\sqrt{\frac
{{1/4}-y(1-y)}{s(1-s)+y(1-y)-{1/4}}}
\\ &=&
-\frac{2}{\pi} \arctan\biggl( \frac{1/2-y}{\sqrt{
1/4-(y-1/2)^2-(s-1/2)^2}} \biggr)
\\ &=&
\frac{2}{\pi} \arctan\biggl( \frac{\sqrt{
1/4-(y-1/2)^2-(s-1/2)^2}}{1/2-y} \biggr) - 1
\end{eqnarray*}
for $s\in(\frac{1-\beta}{2}, \frac{1+\beta}{2})$.
From this, we can now get $g$ by integration. First, from~(\ref
{eq:theassumption}) we obtain that
\[
g(s)= -s \qquad\mbox{if } 0\le s\le\frac{1-\beta}{2}.
\]
Next,
in the interval $(\frac{1-\beta}{2},\frac{1+\beta}{2}
)$ we can integrate $g'$
using the identity
\begin{eqnarray*}
&&\int_0^t \arctan\sqrt{a-u^2} \,du \\
&&\qquad=
t \arctan\sqrt{a-t^2}
+\sqrt{1+a}\arctan\biggl( \frac{t}{\sqrt
{1+a} \sqrt{a-t^2}} \biggr)\\
&&\qquad\quad{}- \arctan\biggl( \frac{t}{\sqrt{a-t^2}}\biggr)\qquad (t^2 < a),
\end{eqnarray*}
and obtain without much difficulty that
\begin{eqnarray*}
g(s) &=&
g\biggl(\frac{1-\beta}{2}\biggr) + \int_{({1-\beta})/{2}}^s
g'(x) \,dx  \\
&=& y-s +
\frac{2}{\pi}\biggl[(s-1/2) \arctan\biggl( \frac{\sqrt{
1/4-(s-1/2)^2-(y-1/2)^2}}{1/2-y} \biggr)
\\
&&\hspace*{50.4pt}{} + \frac{1}{2} \arctan\biggl( \frac{2(s-1/2)(1/2-y)}{\sqrt
{1/4-(s-1/2)^2-(y-1/2)^2}} \biggr)
\\
&&\hspace*{50.4pt}{} - (1/2-y) \arctan\biggl( \frac{s-1/2}{\sqrt{
1/4-(s-1/2)^2-(y-1/2)^2}} \biggr) \biggr]
\end{eqnarray*}
for $s\in(\frac{1-\beta}{2},\frac{1+\beta}{2})$.

Finally, from this last equation it is easy to check that
\[
g\biggl(\frac{1+\beta}{2}\biggr) =
\lim_{s\uparrow({1+\beta})/{2}} g(s) = 2y-\frac{1+\beta}{2},
\]
so, for $s>\frac{1+\beta}{2}$, again because of (\ref
{eq:theassumption}) we get that
$g(s)= 2y-s$.
In particular,~$g$ satisfies the conditions $g(0)=0, g(1)=2y-1$, and
it is also $1$-Lipschitz, so $g\in\mathcal{G}_y$.

To summarize, we have recovered as a candidate minimizer exactly the
function from Theorem \ref{thm-largedevsolution2}. We also verified
that it is in $\mathcal{G}_y$. Furthermore, by the derivation\vadjust{\goodbreak} and the
use of Theorem~\ref{thm-airfoil}, we know that it satisfies~(\ref
{eq:hilberttrans}), or in other words that $W'(s)\equiv0$ on $
(\frac{1-\beta}{2},\frac{1+\beta}{2})$. We wanted to show
that $W(s)\equiv0$ on this interval. But looking at the definition of
$W(s)$ in~(\ref{eq:secondcondition}), we see that
we are still free to choose the Lagrange multiplier $\lambda$, which
starting from (\ref{eq:hilberttrans}) has disappeared from
the analysis! So, taking $\lambda= -2 \int_0^1 {g'(t)\log}|t-1/2| \,dt$
ensures that (\ref{eq:secondcondition}) holds
on $(\frac{1-\beta}{2},\frac{1+\beta}{2})$, which is
one of the sufficient conditions in Lemma \ref{lem-sufficient}.

All that remains to finish the proof that $g=g_y^*$ is the minimizer is
to verify the second and third conditions in (\ref
{eq:sufficientconds}), which we have not considered until now. The third
condition is irrelevant, since $g'$ is never equal to $1$, so we need
to prove that~$W(s)$, which we will now re-denote by $W(s,y)$ to
emphasize its dependence on $y$, is nonnegative when $s\notin
[\frac{1-\beta}{2},\frac{1+\beta}{2}]$. Since $g'$ is an
even function, it follows that $W(\cdot,y)$ is also even, so it is
enough to check this when $s > \frac{1+\beta}{2}$.

Once again, our argument follows closely in the footsteps of the
analogous part of the proof in \cite{pittelromik}. Fix $1/2 < s \le
1$, and let $\hat{y}=\frac{1-\sqrt{1-s^2}}{2}$, so that $\beta(\hat
{y}) = s$. We know from (\ref{eq:secondcondition}) that $W(s,\hat
{y})=0$. To finish the proof, it is enough to show that
\[
\frac{\partial W(s,y)}{\partial y} \le0 \qquad\mbox{for }0\le y \le
\hat{y}.
\]
Denote $G(x,y) = g_y^*(x)$. Then
\[
\frac{\partial W(s,y)}{\partial y} = - 2\int_0^1 {\frac{\partial^2
G(t,y)}{\partial t \,\partial y} \log}|s-t| \,dt
+ 2\int_0^1 {\frac{\partial^2 G(t,y)}{\partial t \,\partial y} \log}
|t-1/2| \,dt.
\]
A computation shows that if $t \in(\frac{1-\beta(y)}{2}, \frac
{1+\beta(y)}{2})$ then
\[
\frac{\partial^2 G(t,y)}{\partial t \,\partial y} =
\frac{\partial}{\partial y} {g_y^*}'(x) =
\frac{2}{\pi}\cdot\frac{1}{\sqrt{1/4-(x-1/2)^2-(y-1/2)^2}},
\]
and otherwise $\partial^2 G(t,y)/\partial t \,\partial y$ is clearly
$0$, so that
\[
\frac{\partial W(s,y)}{\partial y}
= \frac{4}{\pi}
\int_{(1-\beta)/2}^{(1+\beta)/2} \frac{{\log}|t-1/2|-\log
(s-t)}{\sqrt{1/4-(t-1/2)^2-(y-1/2)^2}} \,dt.
\]
Now use the two standard integral evaluations
\begin{eqnarray*}
\int_{-1}^1 \frac{{\log}|x|}{\sqrt{1-x^2}} \,dx &=& -\pi\log(2),\\
\int_{-1}^1 \frac{\log(a-x)}{\sqrt{1-x^2}} \,dx &=& \pi\log
\biggl(\frac{a+\sqrt{a^2-1}}{2}\biggr)\qquad (a>1)
\end{eqnarray*}
(see \cite{gradshteynryzhik}, equation 4.241-7, page 533, and
\cite{gradshteynryzhik}, equation 4.292-3, page 553)
to conclude that
\[
\frac{\partial W(s,y)}{\partial y} = - 4 \log\biggl( \frac{s-1/2 +
\sqrt{(s-1/2)^2-(\beta/2)^2}}{\beta/2} \biggr).\vadjust{\goodbreak}
\]
Since we assumed that $y\le\hat{y}$, or in other words that $s\ge
\frac{1+\beta(y)}{2}$, it follows that
\[
\frac{\partial W(s,y)}{\partial y} \le- 4 \log\biggl( \frac
{s-1/2}{\beta/2} \biggr) \le0
\]
as claimed.

We have proved Theorem \ref{thm-largedevsolution2} (hence also
Theorem \ref{thm-largedevsolution1}), except the claim about the value
of the integral functional $J$ at the minimizer $g_y^*$. This value
could be computed in a relatively straightforward way, as was done for
the analogous claim in \cite{pittelromik}. We omit this computation,
since, as was pointed out in \cite{pittelromik}, this can also be
proved indirectly by using the large deviation principle to conclude
that the infimum of the large deviations rate functional $I(f)+\theta
(y)$ over the space $\mathcal{F}_y$ must be equal to $0$. Therefore
the proof of Theorem \ref{thm-largedevsolution2} is complete.
\end{pf}

\section{The limit shape of $\pdomino$-random ASMs}
\label{sec-limitshape-asm}

We now apply the results from the previous sections to prove a limit
shape result for the height matrix of random ASMs chosen according to
the measure $\pdomino$.
\begin{theorem} \label{thm-limitshape-asm}
Let $F(x,y)=f_y^*(x)$, where for each $0\le y\le1$, $f_y^*$ is the
function defined in (\ref{eq:asm-limitshape}).
For each~$n$ let $M_n$ be a $\pdomino$-random ASM of or\-der~$n$, and
let $H_n = H(M_n) = (h_{i,j}^n)_{i,j=0}^n$ be its associated height matrix.
Then as $n\to\infty$ we have the convergence in probability
\[
\max_{0 \le i,j \le n} \biggl|\frac{h_{i,j}^n}{n} - F(i/n,j/n)
\biggr| \mathop{\longrightarrow}_{n\to\infty}^{\mathbb{P}}
0.
\]
\end{theorem}
\begin{pf}
Fix $\varepsilon>0$.
We want to show that
\[
A_\varepsilon^n = \biggl\{
\max_{0 \le i,j \le n} \biggl|\frac{h_{i,j}^n}{n} - F(i/n,j/n)
\biggr| > \varepsilon\biggr\}
\]
satisfies $\pdomino(A_\varepsilon^n) \to0$ as $n\to\infty$.
We start by showing a weaker statement, namely that
if $y\in(0,1)$ is given, then $\pdomino(B_{\varepsilon,y}^n)\to0$ as
$n\to\infty$, where
\[
B_{\varepsilon,y}^n =
\biggl\{\max_{0 \le j \le n} \biggl|\frac{h_{\lfloor n y \rfloor
,j}^n}{n} - F(y,j/n) \biggr| > \varepsilon/2 \biggr\}
\]
(and $\lfloor x\rfloor$ denotes as usual the integer part of a real
number $x$).
To prove this, note that
\[
B_{\varepsilon,y}^n \subseteq\bigcup_{\mathbf{u}} \bigl\{ M \in\asm
_n \dvtx H(M)_{\lfloor n y \rfloor} = \mathbf{u}\bigr\},
\]
where the union is over all $(n,k)$-admissible sequences $\mathbf{u}$
(with $k=\lfloor n y \rfloor$) such that
\[
\Vert f_{\mathbf{u}}-f_y^*\Vert_\infty= \max_{0\le x\le1} |f_{\mathbf
{u}}(x) - f_y^*(x)| > \varepsilon/2
\]
(here, $\Vert\cdot\Vert_\infty$ denotes the supremum norm on continuous
functions on $[0,1]$).
The number of such sequences is bounded by the total number of
$(n,k)$-admissible sequences, which is equal to ${n\choose k} \le2^n$
[since an $(n,k)$-admissible sequence is determined by the positions of
its $k$ ascents], and for each such~$\mathbf{u}$, by Theorem \ref
{thm-largedev} we have
\[
\pdomino\bigl( M \in\asm_n \dvtx H(M)_{\lfloor n y \rfloor} = \mathbf
{u}\bigr) \le C \exp\bigl(- \bigl(1+o(1)\bigr)c(\varepsilon,y) n^2\bigr),
\]
where $C$ is a universal constant, and
%
%
\begin{equation} \label{eq:def-cepsy}
c(\varepsilon,y) = \inf\{ I(f)+\theta(y) \dvtx f \in\mathcal{F}_y,
\Vert f-f_y^*\Vert_\infty\ge\varepsilon/2 \}.
\end{equation}
If the infimum in the definition of $c(\varepsilon,y)$ were taken over
all $f\in\mathcal{F}_y$, it would be equal to $0$ by Theorem \ref
{thm-largedevsolution1}. Note, however, that the set of \mbox{$g\in\mathcal
{G}_y$} that correspond via~(\ref{eq:coordchange}) to some $f\in
\mathcal{F}_y$ participating in the infimum in~(\ref{eq:def-cepsy})
is a closed subset (in the uniform norm topology) of $\mathcal{G}_y$
that does not contain the minimizer $g_y^*$. Therefore by Theorem \ref
{thm-largedevsolution2} and Lemma \ref{lem-alreadyproved} we get that
in fact $c(\varepsilon, y)>0$. Combining these last observations, we see
that indeed $\pdomino(B_{\varepsilon,y}^n)\to0$ as $n\to\infty$.

Next, we claim that the event $A_\varepsilon^n$ is contained in the union
of a finite number (that depends on $\varepsilon$ but not on $n$) of events
$B_{\varepsilon,y_j}^n$, so if $\pdomino(B_{\varepsilon,y}^n)\to0$ for
all $y$ then also $\pdomino(A_\varepsilon^n)\to0$. This follows because
of the Lipschitz property of the height matrix and of the limit shape
function $F$, which means that proximity to the limit at a sufficiently
dense set of values of $y$ implies proximity to the limit everywhere.
The details are simple, so we leave to the reader to check that taking
$y_j = \lfloor j \varepsilon/8 \rfloor$ for $j=1,2,\ldots,\lfloor
8/\varepsilon\rfloor$ is in fact sufficient to guarantee that
\[
A_\varepsilon^n \subset\bigcup_{j=1}^{\lfloor8/\varepsilon\rfloor}
B_{\varepsilon, y_j}^n
\]
as required.
\end{pf}

In the next section we will use a connection between uniformly random
domino tilings of the Aztec diamond and $\pdomino$-random ASMs to
prove a~limit shape theorem for the height function of the random
domino tiling. It will be helpful to consider for this purpose a
variant of the height matrix of an ASM $M$, which we call the
\textit{symmetrized height matrix} (it is sometimes referred to as
the \textit{skewed summation of $M$}).
If $M\in\asm_n$, we define this as the matrix $\symheight(M)=
(h_{i,j}^*)_{i,j=0}^n$ with entries given by
\[
h^*_{i,j} = i + j - 2 H(M)_{i,j}\qquad (M\in\asm_n, 0\le i,j\le
n),
\]
where $H(M)_{i,j}$ is the $(i,j)$th entry of the (ordinary) height
matrix of $M$. See Figure \ref{fig-symheight} for an example. The
%
%
\begin{figure}
\[
\pmatrix{
0 & 1 & 2 & 3 & 4 & 5 & 6 \cr
1 & 2 & 3 & 2 & 3 & 4 & 5 \cr
2 & 3 & 2 & 3 & 4 & 3 & 4 \cr
3 & 2 & 3 & 4 & 3 & 2 & 3 \cr
4 & 3 & 2 & 3 & 2 & 3 & 2 \cr
5 & 4 & 3 & 2 & 1 & 2 & 1 \cr
6 & 5 & 4 & 3 & 2 & 1 & 0}
\]
\caption{The symmetrized height matrix of the ASM from
Figure \protect\ref{fig-asmexample}.}
\label{fig-symheight}
\end{figure}
following theorem is an equivalent version of Theorem~\ref
{thm-limitshape-asm} formulated for these matrices.
{\renewcommand{\thetheoremm}{11$'$}
\begin{theoremm} \label{thm-limitshape-asm-sym}
Let $G(x,y)=x+y-2 F(x,y)$, where $F$ is defined in Theorem \ref
{thm-limitshape-asm}.
For each $n$ let $M_n$ be a $\pdomino$-random ASM of order $n$, and
let $H_n^* = \symheight(M_n) = ({h_{i,j}^*}^n)_{i,j=0}^n$ be its
associated symmetrized height matrix.
Then as $n\to\infty$ we have the convergence in probability
\[
\max_{0 \le i,j \le n} \biggl|\frac{{h_{i,j}^*}^n}{n} - G(i/n,j/n)
\biggr| \mathop{\longrightarrow}_{n\to\infty}^{\mathbb{P}}
0.
\]
\end{theoremm}}

We remark that it would have been possible to work with symmetrized
height matrices right from the beginning. In that case the large
deviation analysis would have lead directly to Variational Problem \ref
{vari-reform} without going first through Variational Problem \ref
{vari-orig}. [Note that the limiting symmetrized height function
$G(x,y)$ can also be written as $G(x,y)=y-g_y^*(x)$, where $g_y^*$ is
the solution to Variational Problem~\ref{vari-reform}.]

\section{Back to domino tilings} \label{sec-domino}

We now recall some basic facts from \cite{elkiesetal} about domino
tilings of the Aztec diamond $\aztec_n$, their height functions, and
their connection to alternating sign matrices and their height
matrices. This will enable us to use our previous results to reprove
the Cohn--Elkies--Propp limit shape result for the height function of a
uniformly random domino tiling of~$\aztec_n$ as $n\to\infty$.

Let $\mathcal{G}=\mathcal{G}(\aztec_n)$ be the directed graph whose
vertex set is
\[
V(\aztec_n) = \{ (i,j) \in\Z^2 \dvtx |i|+|j|\le n+1 \},
\]
and where the adjacency relations are
\[
(i_1,j_1)\to(i_2,j_2) \quad\iff\quad
\begin{array}{l}
j_1=j_2\quad\mbox{and}\quad i_1-i_2 = (-1)^{n+i_1+j_1},\\
\phantom{j_1=j_2aa}\mbox{ or } \\
\hspace*{1.7pt}i_1=i_2\quad\mbox{and}\quad j_1-j_2 = (-1)^{n+i_1+j_1+1}.
\end{array}
\]
We call $\mathcal{G}(\aztec_n)$ the \textit{Aztec diamond graph}.
Note that its adjacency structure is the standard nearest-neighbor
graph structure induced from $\mathbb{Z}^2$, where in addition edges
are directed according to a checkerboard parity rule, namely, that if a
checkerboard coloring is imposed on the squares $[n,n+1]\times[m,m+1]$
in the lattice dual to $\Z^2$, then the nearest-neighbor edges $u\to
v$ are all directed such that a traveller crossing the directed edge
will see a~black square on her left; see Figure~\ref{fig-dominoheight}(a).

Define a \textit{height function} to be any function $\eta$
on $V(\aztec
_n)$ such that for any edge $u\to v$ in $\mathcal{G}(\aztec_n)$ we have
\[
\eta(u) - \eta(v) = 1\mbox{ or }-3,
\]
and such that $\eta(u)-\eta(v)=1$ whenever $u\to v$ is one of the
boundary edges. A~height function $\eta$ on $V(\aztec_n)$ is called
\textit{normalized} if $\eta(-n,0)=0$.

It is known that any domino tiling $T$ of $\aztec_n$ determines a
unique normalized height function $\eta_T$ by the requirement that
for any directed edge $u\to v$ we have
\[
\eta_T(u) - \eta_T(v) = \cases{ -3, &\quad the segment $(u,v)$ crosses a
domino tile in
$T$,\cr
1, &\quad otherwise.}
\]
Conversely, any normalized height function $\eta$ is of the form $\eta
_T$ for some domino tiling.
See Figure \ref{fig-dominoheight}(b).

%
%
\begin{figure}
\begin{tabular}{@{}cc@{}}

\includegraphics{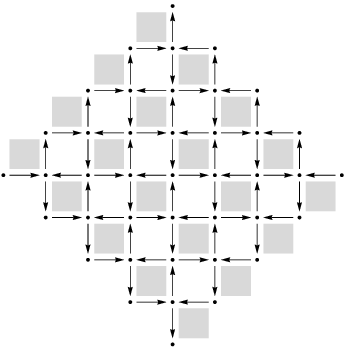}
 & \includegraphics{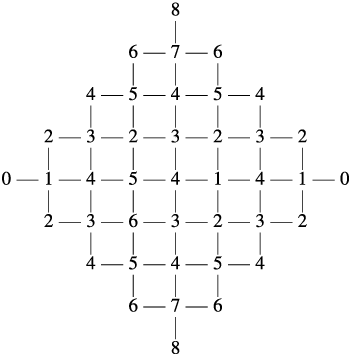}\\
(a) & (b)
\end{tabular}
\caption{\textup{(a)} The Aztec diamond graph of order 3; \textup{(b)} the normalized
height function corresponding to the tiling from Figure \protect\ref
{fig-aztecdiamond}.}
\label{fig-dominoheight}
\end{figure}

Another important fact concerns the beautiful connection, discovered by
Elkies et al. \cite{elkiesetal}, between height functions of domino
tilings of $\aztec_n$ and height matrices of ASMs: each normalized
height function $\eta$ on $V(\aztec_n)$ is essentially comprised of
the superposition of two (symmetrized) height matrices
$\symheight(A), \symheight(B)$ where $A$ is an ASM of order $n$ and
$B$ is an ASM of order $n+1$. More precisely, $\symheight(A)$ and
$\symheight(B)$ can be recovered from~$\eta$ by
%
%
\begin{eqnarray}
\label{eq:symheight-a}
\symheight(A)_{i,j} &=& \frac{\eta(-n+1+i+j,-i+j)-1}{2},\\
\label{eq:symheight-b}
\symheight(B)_{i,j} &=& \frac{\eta(-n+i+j,-i+j)}{2}
\end{eqnarray}
(note the slight difference from the formulas in \cite{elkiesetal} due
to a difference in the center of the coordinate system used).
This correspondence defines a one-to-one mapping from the set of domino
tilings of $\aztec_n$ to the set of pairs $(A,B)$ where $A\in\asm_n$
and $B\in\asm_{n+1}$. The pairs $(A,B)$ which are obtained via this
mapping are exactly the so-called \textit{compatible} pairs
defined by
Robbins and Rumsey \cite{robbinsrumsey}: $A$ and $B$ are called
compatible if
the (nonsymmetrized) height matrices $H(A), H(B)$ satisfy the conditions
\begin{eqnarray*}
H(B)_{i,j} &\le& H(A)_{i,j}, \\
H(B)_{i+1,j+1}-1 &\le& H(A)_{i,j}, \\
H(A)_{i,j} &\le& H(B)_{i+1,j}, \\
H(A)_{i,j} &\le& H(B)_{i,j+1}.
\end{eqnarray*}
It was also shown in \cite{robbinsrumsey} that for a given ASM $A \in
\asm_n$, the number of $B \in\asm_{n+1}$ that are compatible with
$A$ is equal to $2^{N_+(A)}$. Combined with the formula for the number
of domino tilings of $\aztec_n$, this implies that if $T$ is a
uniformly random domino tiling of $\aztec_n$, and $(A,B)$ is the
associated pair of compatible ASMs, then the random ASM $A$ is
distributed according to the domino measure $\pdomino$ (of course,
this provides the explanation for our choice of name for this measure).

We now combine Theorem \ref{thm-limitshape-asm-sym} with the above
discussion to easily obtain the following result, originally proved in
\cite{cohnelkiespropp}.
\begin{theorem} \label{thm-limitshape-dom}
For each $n\ge1$, let $T_n$ be a uniformly random domino tiling of
$\aztec_n$, and let $\eta_n=\eta_{T_n}$ be its associated height
function. Then as $n\to\infty$ we have the convergence in probability
\[
\max_{(i,j) \in V(\mathit{AD}_n)} \biggl| \frac{1}{n} \eta_n(i,j) -
R(i/n, j/n) \biggr| \mathop{\longrightarrow}_{n\to\infty}^{\prob} 0,
\]
where
\[
R(u,v) = 2G\biggl(\frac{u-v+1}{2}, \frac{u+v+1}{2}\biggr)\qquad
(|u| + |v| \le1),
\]
and $G$ is defined in Theorem \ref{thm-limitshape-asm-sym}.
\end{theorem}
\begin{pf} For pairs $(i,j)\in V(\aztec_n)$ for which $i+j+n$ is odd,
the proximity of $n^{-1} \eta_n(i,j)$ to $R(i/n,j/n)$ follows from
(\ref{eq:symheight-a}). For other pairs $(i,j)$, apply the previous
observation to any pair $(i',j')$ adjacent to $(i,j)$ and use the facts
that $|\eta_n(i,j)-\eta(i',j')|\le3$ and that $R$ is a continuous
function.\vadjust{\goodbreak}~%
\end{pf}

\section{Concluding remarks} \label{sec-finalremarks}

\subsection{Relation to the arctic circle theorem}

Theorem \ref{thm-limitshape-dom} implies a weak form of the arctic
circle theorem (Theorem \ref{thm-arcticcircle}): first, since inside
the arctic circle the limit shape function $R(u,v)$ is not a linear
function, it follows that the frozen region cannot extend in the limit
into the arctic circle, which is ``half'' of the theorem. In the other
direction, we get only a weaker statement that outside the arctic
circle we can have in the limit at most $o(n^2)$ ``nonfrozen''
dominoes, since that is what the linearity of the limiting height
function in that region implies.

It is interesting to contrast this with the square Young tableaux
problem. There, too, the large deviation approach gave only a bound in
one direction on the behavior of the square Young tableau along the
boundary of the square. However, Pittel and Romik managed to prove the
other direction using an additional combinatorial argument (inspired by
a method of Vershik and Kerov \cite{vershikkerov2}). It would be
interesting to see whether one can emulate this approach in the present
case to get a new proof of the arctic circle theorem. A similar
question applies to the problem of random boxed plane partitions
studied by Cohn, Larsen and Propp \cite{cohnlarsenpropp}, where again
the limit shape theorem for the height function does not imply an
arctic circle result in its strong form.

\subsection{Other arctic circles and more general arctic curves}

In this paper we have shown that two so-called arctic circle
phenomena, namely those appearing in the contexts of random domino
tilings of the Aztec diamond and of random square Young tableaux, are
closely related, in the sense that the limit shape results underlying
them can be given a more or less unified treatment using the
techniques of large deviation theory and the calculus of variations,
and that the derivations in both cases result in nearly identical
computations and formulas. Note that these are not
the only combinatorial models in which arctic circles appear. Other
examples known to the author include the shape of a uniformly random
boxed plane partition derived by Cohn, Larsen and Propp
\cite{cohnlarsenpropp} and the arctic circle theorem for random
groves, due to Petersen and Speyer \cite{randomgroves}. One might
therefore wish to extend the insights of the present paper to these
other models. The treatment
of boxed plane partitions in \cite{cohnlarsenpropp} is already based
on a large deviations analysis, and in fact the variational problem
studied there seems to be quite closely related to the variational
problems studied here and in \cite{pittelromik}. Therefore, it should be
relatively straightforward to use the techniques presented here to
give a~new derivation of the solution to the variational problem from
\cite{cohnlarsenpropp} (which in particular would provide a fully
satisfactory answer
to Open Question 6.3 from that paper).

The analysis of random groves, on the other hand, is based on
generating function techniques, and it is not clear how to apply the
ideas presented here to that setting.

It is also worth mentioning that there is a large literature on the
subject of limit shapes of various classes of random combinatorial
objects, and tiling models in particular, where one encounters in many
cases a spatial phase transition between a ``frozen'' and a
''temperate'' region. The equations governing such limit shapes can in
general lead to a much more diverse family of noncircular ``arctic
curves'' describing the shape of the interface between the frozen and
temperate regions. For details, see, for example, the papers
\cite{cohnkenyonpropp,kenyonokounkov,kenyonokounkovsheffield}.

%
%
\begin{figure}[b]

\includegraphics{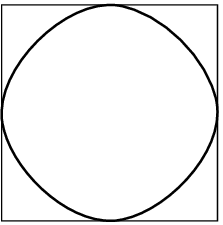}

\caption{The Colomo--Pronko conjectured limit shape for uniformly
random alternating sign matrices.}
\label{fig-colomopronko}
\end{figure}

\subsection{Uniformly random ASMs}

One reason why the methods and ideas presented in this paper may be
considered worthy of attention is somewhat speculative in nature. It
pertains to the potential future applicability of these methods and
ideas to a well-known open problem on alternating sign matrices: that
is, the problem of finding the limiting shape of a \textit{uniformly}
random ASM of high order. Here, ``limit shape'' is usually taken to
refer to the shape of the region in which the nonzero entries cluster
(the ``temperate region''), although one could also ask (as we have
done here in the case of $\pdomino$-random ASMs) about the limiting
shape of the height matrix, which also contains useful information
about the behavior of the ASM inside the temperate region.

Important progress on this question was made recently by Colomo and
Pronko~\cite{colomopronkolimitshape}, who conjectured the explicit formula
\[
x^2 + y^2 + |xy| = |x| + |y|
\]
for the limit shape of the boundary of the temperate region in a
uniformly random ASM (Figure \ref{fig-colomopronko}), and provided a
heuristic derivation of this conjectured formula based on certain
natural, but still conjectural, analytic assumptions.

In view of this state of affairs, it is worth noting that the ideas
presented in this paper seem to be rather suitable for attacking this
challenging open problem. There is only one main ``missing piece''
(albeit possibly a very substantial one) in our understanding. The idea
is to replace Theorem \ref{thm-mostimportant}, which is the
combinatorial observation which lies at the heart of the large
deviations analysis, with an analogous statement that holds for the
uniform measure on the set $\asm_n$ of ASMs of order~$n$. This
statement is given in the following theorem, whose proof follows
similar lines to the proof of Theorem~\ref{thm-mostimportant} and is omitted.
\begin{theorem}\label{thm-mostimportant-analogue}
Let $\punif$ denote the uniform measure on the set of
ASMs of order $n$.
For a positive integer $k$ and integers $x_1 < x_2 < \cdots< x_k$,
denote by $\alpha_k(x_1,\ldots,x_k)$ the number of monotone triangles
of order $k$ with bottom row $(x_1,\ldots,x_k)$.
Then, in the notation of Theorem \ref{thm-mostimportant}, we have
\begin{eqnarray*}
&&
\punif[ M\in\asm_n \dvtx (X_k(1),\ldots
,X_k(k))=(x_1,\ldots,x_k) ]
\\
&&\qquad= \frac{1}{|\asm_n|} \alpha_k(x_1,\ldots,x_k) \alpha
_{n-k}(y_1,\ldots, y_{n-k}).
\end{eqnarray*}
\end{theorem}

Unfortunately, while a formula for $|\asm_n|$ is known (see \cite{bressoud}),
the function~$\alpha_k$ seems much more difficult to understand (and
in particular, to derive asymptotics for) than the Vandermonde function
$\Delta$, and this is the piece that is missing when one tries to
duplicate our analysis to the setting of uniformly random ASMs.
Nevertheless, the function $\alpha_k$ has recently been the subject of
several very fruitful studies.
Fischer \cite{fischerformula} derived the following beautiful
``operator formula'' for $\alpha_k$:
%
%
\begin{equation}\label{eq:fischerformula}
\alpha_k(x_1,\ldots,x_k) = \biggl[ \prod_{1\le i < j \le k} (
\mathrm{Id} + E_i D_j) \biggr] \frac{\Delta(x_1,\ldots,x_k)}{\Delta
(1,\ldots,k)}.
\end{equation}
Here, $\Delta$ is the Vandermonde function as before, and $
\mathrm{Id}$, $E_j$ and $D_i$ are operators acting on the ring of polynomials
$\mathbb{C}[x_1,\ldots,x_k]$: $\mathrm{Id}$ is the identity operator,
$E_j$ is the shift operator in the variable $x_j$ (that substitutes
$x_j+1$ for each occurrence of $x_j$ in a polynomial) and
$D_i=E_i-\mathrm{Id}$ is the \mbox{(right-)differencing} operator in
the variable $x_i$.

Fischer then showed in several subsequent papers that it is possible to
use~(\ref{eq:fischerformula}) to get highly nontrivial information on
the enumeration of alternating sign matrices: in
\cite{fischerrefinedasm} she obtained a new proof of the celebrated Refined
Alternating Sign Matrix theorem (see \cite{bressoud} for the statement
and fascinating history of this result); in \cite{fischerromik} she
and the author obtained additional results concerning a
``doubly-refined'' enumeration of ASMs; and in \cite{fischer09} and~\cite{fischer10} she extended these results further to a
``multiply-refined'' enumeration. Thus, it seems quite conceivable that
additional study of $\alpha_k$ may eventually lead to a~deeper
understanding of this function,
that, in combination with Theorem~\ref{thm-mostimportant-analogue} and
the techniques of this paper, could provide a basis for a successful
attack on the limit shape problem for uniformly random ASMs.


%
\printaddresses

\end{document}